\theoremstyle{definition}
\newtheorem{definition}{Definition}[subsection]
\newtheorem{example}{Example}[subsection]
\newtheorem{theorem}{Theorem}[subsection]
\newtheorem{proposition}{Proposition}[subsection]
\newtheorem{lemma}{Lemma}[subsection]
\newtheorem{note}{Note}[subsection]
\newtheorem{remark}{Remark}[subsection]
\title{FRACTAL INTERPOLATION ON THE REAL PROJECTIVE PLANE}
\author{A. Hossain}
\address{Department of Mathematics, Presidency University, 86/1, College Street, Kolkata, 700 073, West Bengal, India}
\curraddr{}
\email{hossain4791@gmail.com}
\author{Md. N. Akhtar}
\address{Department of Mathematics, Presidency University, 86/1, College Street, Kolkata, 700 073, West Bengal, India}
\email{nasim.iitm@gmail.com}
\thanks{The authors thank Akash Banerjee for many helpful discussions to get the figures}
\author{M. A. Navascu\'{e}s}
\address{Departamento de Matem\'{a}tica Aplicada
	Universidad de Zaragoza, Spain}
\email{manavas@unizar.es}
\begin{document}
	\pagestyle{plain}
	\maketitle
\begin{abstract}
Formerly the geometry was based on shapes, but since the last centuries this founding mathematical science deals with transformations, projections and mappings. Projective geometry identifies a line with a single point, like the perspective on the horizon line and, due to this fact, it requires a restructuring of the real mathematical and numerical analysis. In particular, the problem of interpolating data must be refocused. In this paper we define a linear structure along with a metric on a projective space, and prove that the space thus constructed is complete. Then we consider an iterated function system giving rise to a fractal interpolation function of a set of data.
\end{abstract}
{\bf Keywords:} Real projective plane, Fractal interpolation functions, Real projective iterated function system, Real projective fractal function.\\

\textbf{MSC Classification} 28A80, 41Axx
	\section{Introduction}
\subsection{Background}
The fractal features describe closely the properties of natural phenomenons. For this reason, the interest in the  mathematical field of the fractal geometry increases rapidly. New procedures of fractal analysis are developed and these procedures are proving their usefulness in real systems in various fields such as informatics \cite{FGN,ficbymf}, engineering \cite{FG,fmss,Scfstr}, medical screening \cite{Ftipb}, biology \cite{Ipbbfa},  cosmology \cite{tfsu}, etc. Also, in dimensions theory estimation of the fractal dimension which may be non-integer value has various application in geometry \cite{bdoaff,bdoaffwvss,Akhtar2022}, has a huge usefulness in fractal geometry.\\
In mathematics, an iterated function system (IFS) is a method of constructing fractals. A fractal interpolation function (FIF) can be considered as a continuous function that interpolates some specific data points and whose graph is the attractor (a fractal set) of an IFS. Barnsley \cite{Ftmapp}, introduced the concept of fractal interpolation  function and it has been widely used in many scientific applications like approximation theory (to approximate discrete sequences of data), image compression, computer graphics, etc. since then. For more details interested readers may consult the references \cite{ficbymf,Uifsmds}. Massopust \cite{prmfs}, presented the construction of self-affine fractal interpolation surfaces (FISs) on a simplex. Navascues \cite{ftsbyman}, constructed a non-self-affine fractal interpolation function as perturbation of any continuous function on a compact set. A rich development in the approximation theory  using non-affine fractal functions can be found in \cite{fpsbyman,fbflps,fabynavascus,ftsbyman,cofsbynasim} and references therein. Vince \cite{mifsonps}, introduced the IFS consisting of M\"{o}bius transformations on the extended complex plane or equivalently on the Riemann sphere. Most of the authors discussed about the FIFs on the Euclidean space \cite{Ftmapp,pbld,bfifog}. Recently, Barnsley et al. \cite{pifs}, introduced the concept of projective IFS on a real projective space. There, the authors  characterized when a projective IFS has an attractor and established the result that a projective IFS has at most one attractor. \\
Projecting a 3D scene onto a 2D image is one of the fundamental issues in 3D computer graphics. In this regard to focus computer vision in general, and especially image formation in particular, projective geometry works as a mathematical framework. Many significant progress has been made in problems as computer vision by applying tools from the classical projective geometry \cite{Opgcv,tdcvagv,pgfiaxv,Pgcvmr,Pgtdcv,mvgcvbyhz}. Projective geometry is usually developed in spaces of a special type, called projective spaces, that are different from the usual affine or Euclidean spaces. A projective space may be viewed as an extension of an Euclidean space, or, more generally, an affine space with points at infinity \cite{ugtopg,apgmt}. Though it has a manifold like structure\cite{aitam}, it is more complicated to develop fractal theory on it. \\
In the literature, a rich development has been made for the constructions of affine FIFs, FISs, non-affine FIFs, and non-affine FISs and their contributions to the field of fractal geometry and approximation theory \cite{prmfs,ftsbyman,pbld,bfifog,BtfabyV,frftabyAk}. But the fractal interpolation theory on the projective space is totally unexplored. The present paper provides a cornerstone of a surprisingly rich mathematical theory associated with the real projective fractal interpolation function (RPFIF). A method is developed to construct a RPFIF for a given data set on the real projective plane $\mathbb{RP}^2\setminus \mathbb{H}_{e_3}$. The advantage of construction of such a RPFIF  is that it is an infinte fractal (in $\mathbb{R}^3$)  consisting of self-affine fractal interpolation functions (which are similar to each other upto contraction) giving thereby a choice of large flexibility in approximating functions.

%%%%%%%%%%%%%%%%%%%%%%%%%%%%%%%%%%%%%%%%%%%%%%%%
%	The fractal dimension  measures the complexity of a geometric shape in the space.   
%	It may be seen also as a measure of the space-filling capacity of a pattern. A fractal dimension may not be an integer.  In the literature the concept of various dimensions of the IFS sets with respect to the Euclidean distance on the plane is largely treated (see \cite{FE,FG,Imdbyfl,cfisrg,tccfspm,bdoaff,bdoaffwvss,Otbdcnfif,Akhtar2022,hvfifbybsfg}). 
%	In this article, with the base of these concepts, we also estimated the fractal dimension of the graph of a RPFIF. As the graph of the RPFIF can be viewed as a subset of the real projective plane as well as a subset of $\mathbb{R}^3$, dimension is estimated in both the spaces.\\
%	One of the interesting features of the projective geometry is that of duality. In projective geometry the dual of a point is a line and the dual of a line is a point. Dual space is the collection of all hyperplanes of a projective space. It is used in many branches of mathematics, such as tensor analysis with the finite dimensional spaces, measure distribution and Hilbert spaces \cite{fansanddual}. At the end of this article an open problem is posed concerning the dual RPFIF. 
%%%%%%%%%%%%%%%%%%%%%%%%%%%%%%%%%%%%%%%%%%%%5
\subsection{Structure of paper and discussion of results}
In Section~\ref{prelimi}, we introduce  some notation, give basic deﬁnitions of projective space, manifold structure of the projective space, Hausd\"{o}rff metric, attractors and construction of the fractal interpolation functions. In Section~\ref{dcopsahp}, we present a decomposition of the real projective plane $\mathbb{RP}^2\setminus \mathbb{H}_{e_3}$ so that it becomes a vector space over $\mathbb{R}$. A new metric and a norm is introduced on $\mathbb{RP}^2\setminus \mathbb{H}_{e_3}$ to make it a complete normed linear space that provides a setting for the main results of the paper. We define projective interval and projective rectangle which are needed in the construction of the RPFIF in Section~\ref{rpfifdef} and also provide  geometrical structures of these (see Figures~\ref{fig3} and \ref{fig2}).  

In Section~\ref{rpfifdef}, we discuss the construction of a real projective fractal interpolation function for a data set on $\mathbb{RP}^2\setminus \mathbb{H}_{e_3}$. For that a RPIFS is formulated and it is seen that the  maps in the RPIFS contract the projective rectangle while acting on it (see Figure~\ref{prftfns}).
The next theorem is the main result.
\begin{theorem}\label{maint11}
	If $\big\{\mathbb{RP}^2\setminus \mathbb{H}_{e_3}; W_n:~ n=1,2,\ldots,N\big\}$ is a RPIFS, then there exists a fractal function {\bf f} corresponding to it such that the graph of {\bf f} is the attractor of the RPIFS.
\end{theorem}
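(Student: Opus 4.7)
The plan is to mimic Barnsley's classical construction of a fractal interpolation function, but carried out entirely in the complete normed linear space $\mathbb{RP}^2\setminus\mathbb{H}_{e_3}$ built in Section~\ref{dcopsahp}. Since the RPIFS maps $W_n$ contract the projective rectangle and preserve the projective interval (base) structure, we expect each $W_n$ to split as $W_n(x,y)=\bigl(L_n(x),\,F_n(x,y)\bigr)$, where $L_n$ acts on the projective interval and $F_n$ is a contraction in the $y$-variable with respect to the projective norm. This gives exactly the data needed to set up a Read--Bajraktarevi\'c type operator.

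First I would introduce the function space $\mathcal{F}$ consisting of all continuous maps $\mathbf{g}$ from the projective interval $I$ into $\mathbb{RP}^2\setminus\mathbb{H}_{e_3}$ taking the prescribed interpolation values at the endpoints (and at the subdivision nodes if needed). Endowed with the supremum metric
\[
  d_\infty(\mathbf{g},\mathbf{h})=\sup_{x\in I}\,\|\mathbf{g}(x)-\mathbf{h}(x)\|,
\]
the completeness of $\mathbb{RP}^2\setminus\mathbb{H}_{e_3}$ established earlier makes $(\mathcal{F},d_\infty)$ a complete metric space. Then I would define the operator $T\colon\mathcal{F}\to\mathcal{F}$ by
\[
  (T\mathbf{g})(x)=F_n\bigl(L_n^{-1}(x),\,\mathbf{g}(L_n^{-1}(x))\bigr),\qquad x\in L_n(I),\ n=1,\dots,N.
\]
The two checks required here are (i) that $T\mathbf{g}$ is well defined and continuous at the junction points $L_n(I)\cap L_{n+1}(I)$, which is forced by the interpolation conditions satisfied by the $W_n$, and (ii) that $T\mathbf{g}$ again lies in $\mathcal{F}$.

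Next I would show that $T$ is a contraction. Using the contractivity of $F_n$ in the second variable with factors $s_n<1$, the computation gives
\[
  d_\infty(T\mathbf{g},T\mathbf{h})\le\bigl(\max_{1\le n\le N}s_n\bigr)\,d_\infty(\mathbf{g},\mathbf{h}),
\]
so Banach's fixed point theorem produces a unique $\mathbf{f}\in\mathcal{F}$ with $T\mathbf{f}=\mathbf{f}$. By construction this $\mathbf{f}$ satisfies the self-referential relation, and the graph $G(\mathbf{f})=\{(x,\mathbf{f}(x)):x\in I\}$ obeys
\[
  G(\mathbf{f})=\bigcup_{n=1}^{N}W_n\bigl(G(\mathbf{f})\bigr).
\]
Since $G(\mathbf{f})$ is compact (continuous image of a compact set) and the RPIFS has at most one attractor (by the uniqueness result of Barnsley et al.\ \cite{pifs}), the graph must coincide with the attractor of $\{\mathbb{RP}^2\setminus\mathbb{H}_{e_3};W_n\}$.

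The main obstacle I expect is not the contraction estimate itself, which is mechanical once the norm is fixed, but rather step (i) above: verifying that the operator produces a \emph{continuous} function across the junctions of the subdivided projective interval. This requires the RPIFS maps to have been set up so that their endpoint images match consistently, and the verification must be carried out in the projective linear structure of Section~\ref{dcopsahp} rather than in the ambient Euclidean one. A secondary concern is to confirm that $T\mathbf{g}(x)$ always stays inside $\mathbb{RP}^2\setminus\mathbb{H}_{e_3}$, which will follow from the assumption that the $W_n$ send the projective rectangle into itself.
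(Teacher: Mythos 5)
Your overall route --- split $W_n$ into a base map $L_n$ and a fibre map $F_n$ via the decomposition $\mathbb{RP}^2\setminus\mathbb{H}_{e_3}=\mathbb{H}_{10}\oplus\mathbb{H}_{01}$, set up a Read--Bajraktarevi\'c operator on the complete space of continuous functions with prescribed endpoint values, apply Banach's fixed point theorem, and then show that the graph of the fixed point is invariant under the Hutchinson operator --- is exactly the paper's. The well-definedness check at the junction points and the contraction estimate with factor $\max_n\vert d_n\vert$ are carried out in the paper just as you describe.

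The genuine gap is in your final step. You conclude that $G(\mathbf{f})$ is the attractor from (a) $G(\mathbf{f})$ being compact and invariant and (b) the fact that ``the RPIFS has at most one attractor.'' That inference is invalid: a compact invariant set of an IFS need not be an attractor at all, and a uniqueness statement cannot supply existence. You also take for granted that the $W_n$ ``contract the projective rectangle,'' but the paper explicitly notes that the $W_n$ may or may not be contractive with respect to $d_{\mathbb{P}}$, so contractivity is not free. The paper closes this hole by introducing an auxiliary metric $d_{\theta}\big((x:y:z),(x':y':z')\big)=d_{\mathbb{P}}\big((x:0:z),(x':0:z')\big)+\theta\, d_{\mathbb{P}}\big((0:y:z),(0:y':z')\big)$, proving it equivalent to $d_{\mathbb{P}}$, and showing that for a suitable choice of $\theta$ (depending on the coefficients $a_n,c_n$) and under $\vert d_n\vert<1$ the maps $W_n$ are genuine contractions in $d_{\theta}$. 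Only then does the RPIFS possess a unique attractor $G$, and only then does the invariance $\bigcup_n W_n\big(G(\mathbf{f})\big)=G(\mathbf{f})$ of the compact set $G(\mathbf{f})$ force $G(\mathbf{f})=G$. Your argument needs this contractivity step (or some substitute establishing that an attractor exists and that every compact invariant set coincides with it) before the last paragraph can go through.
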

%An inside view of the construction of the attractor of the RPIFS is shown in Figure~\ref{insidevrfif}.
Figure~\ref{insidevrfif} illustrates the construction of a RPFIF.
Side by side detailed illustrations of the construction of a RPFIF in $\mathbb{RP}^2\setminus \mathbb{H}_{e_3}$ and the corresponding FIF at level $z=z_0$ (or, equivalently in $\mathbb{R}^2$) are provided in this section (see Figure~\ref{initialstep}, \ref{step1}, \ref{step2}, \ref{step3} and \ref{finalstep}). This shows that the RPFIF construction is more inclusive.
In Example~\ref{expoftgotrpfif}, we consider a data set in $\mathbb{RP}^2\setminus \mathbb{H}_{e_3}$ with different scale vectors and see the nature of the graphs of the corresponding RPFIFs respectively.

%%%%%%%%%%%%%%%%%%%%%%%%%%%%%%%%%%%	
%	In section~\ref{fdgrpfif}, we estimate the fractal dimensions of their graphs. As they are  complicated and their structures are different from the regular fractals in the Euclidean spaces, it is difficult to estimate the dimensions. If $G$ is the graph of a RPFIF {\bf f}, then the next is the main result in this section.
%	\begin{theorem}
	%		\begin{enumerate}\label{dimtmain}
		%			\item If $G$ is viewed as a subset of $\mathbb{RP}^2$, then $ \dim_B G =D$,  $1\leq D \leq 2$.\label{st1ofdt}
		%			\item 	If $G$ is viewed as a subset of $\mathbb{R}^3$, then $ \dim_B G =1+D$. \label{st2odtfg}
		%		\end{enumerate}
	%	\end{theorem}
%	
%	In Section~\ref{dualotrpfif}, we approach the construction of the dual of a RPFIF. To do that we provide a set-up for a dual RPIFS corresponding to a given RPIFS. We claim that the dual RPIFS has an attractor (see Figure~\ref{dualofrpfif1}). This dual attractor is the collection of {\bf hyperplanes} corresponding to the attractor of a RPIFS.  A conjecture and further considerations finish the article. 
%%%%%%%%%%%%%%%%%%%%%%%%%%%%%%%%%%%%%%%%%%%%%%%%%%%%%%%%%%
\section{Preliminaries}\label{prelimi}
\subsection{Projective space}
\begin{definition}[Real projective space]
	Given an Euclidean space $\mathbb{R}^{n+1}$, the real projective space associated with $\mathbb{R}^{n+1}$ is the set $\mathbb{RP}^n$ of one dimensional subspaces or (vector) lines in $\mathbb{R}^{n+1}$.
\end{definition}
One can identify $\mathbb{RP}^n$ as the quotient of the set $\mathbb{R}^{n+1}\setminus \{0\}$ of non-zero vectors by the equivalence relation $x\sim y$ if and only if $x=\lambda y$ for some $\lambda\in\mathbb{R}^*$ (non-zero reals). Now, for $x=(x_1,x_2,\ldots,x_{n+1})\in \mathbb{R}^{n+1}\setminus \{0\}$, we denote $(x_1:x_2:\ldots:x_{n+1})$ as the equivalence class  containing $x$. Thus we have a canonical quotient map $\nu:\mathbb{R}^{n+1}\setminus \{0\}\to\mathbb{RP}^n$  that associates to the each non-zero vector $x=(x_1,x_2,\ldots,x_{n+1})\in \mathbb{R}^{n+1}\setminus \{0\}$ to the element $(x_1:x_2:\ldots:x_{n+1})\in \mathbb{RP}^n$. The points $(x_1,x_2,\ldots,x_{n+1})\in \mathbb{R}^{n+1}\setminus \{0\}$ such that $\nu(x_1,x_2,\ldots,x_{n+1})=p$ is referred to as homogeneous coordinates of an element $p\in\mathbb{RP}^n$. 	For more details, interested authors may consult the references \cite{pifs,ugtopg,apgmt}.
Also, one can view $\mathbb{RP}^n$ as a $n$-dimensional manifold
\cite{aitam} with standard atlas
\begin{align*}
	\left\{(U_1,\phi_1),(U_2,\phi_2),\ldots,(U_{n+1},\phi_{n+1})\right\}
\end{align*}
defined as follows. For $k=1,2,\ldots,n+1$, let
\begin{equation*}
	U_k=\left\{(x_1:x_2:\ldots:x_{n+1})\in\mathbb{RP}^n:~x_k\neq 0 \right\}
\end{equation*}
and the chart be given by
\begin{align*}
	\phi_k:U_k\to\mathbb{R}^n,~(x_1:x_2:\ldots:x_{n+1})\to \left(\frac{x_1}{x_k},\frac{x_2}{x_k},\ldots,\frac{x_{k-1}}{x_k},\frac{x_{k+1}}{x_k},\ldots,\frac{x_{n+1}}{x_k}\right).
\end{align*}
This is well defined, as  multiplying  $x_i$ by a non-zero scalar  the quotient does not change.
\begin{definition}[Hyperplane]
	If $p,q\in\mathbb{RP}^n$ have the homogeneous coordinates $(p_1,p_2,\ldots,p_{n+1})$ and $(q_1,q_2,\ldots,q_{n+1})$ respectively, and $\sum_{k=1}^{{n+1}}p_kq_k=0$, then we say that $p$ is orthogonal to $q$, and write $p\perp q$. A {\bf hyperplane} in $\mathbb{RP}^n$ is a set of the form 
	\begin{equation*}
		\mathbb{H}_p=\big\{q\in\mathbb{RP}^n:p\perp q\big\}\subseteq \mathbb{RP}^n
	\end{equation*}
	for some $p\in\mathbb{RP}^n$.
\end{definition}
\begin{definition}[see \cite{pifs}]
	A set $\mathbb{K}\subseteq \mathbb{RP}^n$ is said to {\bf avoid a hyperplane} if there exists a hyperplane $\mathbb{H}_p\subseteq \mathbb{RP}^n$ such that $\mathbb{H}_p\cap \mathbb{K}=\emptyset$.
\end{definition}
\begin{definition}[Line in the real projective space]
	A line in the real projective space is the set of equivalence classes of points in a 2-dimensional subspace of $\mathbb{R}^{n+1}$. In other words,  if $a,b\in\mathbb{RP}^n$ have the homogeneous coordinates $(a_1,a_2,\ldots,a_{n+1})$ and $(b_1,b_2,\ldots,b_{n+1})$ respectively, then the corresponding line $\overline{ab}\subset \mathbb{RP}^n$ has its homogeneous coordinates of the form $\big(ua_1+vb_1,ua_2+vb_2,\ldots,ua_{n+1}+vb_{n+1}\big)$, where $u,v\in\mathbb{R}$, and both are  not zero simultaneously.
\end{definition}
The ``{\bf round}" metric $d_R$ on $\mathbb{RP}^n$ is defined as follows. Each element $x\in \mathbb{RP}^n$ is represented by a line in $\mathbb{R}^{n+1}$ through the origin or by the two points $a_x$ and $b_x$, where this line intersects the unit sphere centered at the origin. Then the round metric is given by $d_R(x,y)=\min\{\Vert a_x-a_y\Vert,\Vert a_x-b_y\Vert\}$, where the norm is the Euclidean norm in $\mathbb{R}^{n+1}$. In term of homogeneous coordinates, the metric is given by 
\begin{equation*}
	d_R(x,y)=\sqrt{2-2\frac{\vert\langle x,y\rangle\vert}{\Vert x \Vert\Vert y \Vert}},
\end{equation*}
where $\langle \cdot,\cdot\rangle $ is the usual Euclidean inner product. The metric space $(\mathbb{RP}^n,d_R)$ is compact \cite{pifs}.

\subsection{Iterated function system}
\begin{definition}[Hausd\"{o}rff metric]
	Let $(X,d)$ be a metric space and $\mathscr{H}(X)$ denotes the space of all non-empty compact subsets of $X$. Then the Hausd\"{o}rff distance between the sets $A$ and $B$ in $\mathscr{H}(X)$, denoted by $h_d$, is defined by
	\begin{displaymath}
		h_d(A, B)=\max \left\{ \sup_{x\in A} \inf_{y\in B}d(x, y),~~ \sup_{y\in B} \inf_{x\in A}d(x, y) \right \}\quad \mbox{for all}~A, B \in \mathscr{H}(X).
	\end{displaymath}
\end{definition}
If $(X,d)$ is a complete metric space, then $\left ( \mathscr{H}(X), h_d \right)$ is also a complete metric space (see \cite{FG,FE}).
\begin{definition}
	Let $(X,d)$ be a complete metric space. If $W_n: X\to X$, $n=1, 2,\ldots, N$, are continuous maps, then  $\mathscr{W}=\{X;W_n:n=1,2,\ldots,N\}$ is called an \textbf{iterated function system} (IFS) (see \cite{FE}). The system is called hyperbolic IFS if each function $W_n:X\rightarrow X$ is contractive with contraction factor $0\leq c_n<1.$ In particular, if $W_n$'s are the projective transformations on the real projective plane, then $\mathscr{W}=\{\mathbb{RP}^2;W_n:n=1,2,\ldots,N\}$ is called a  \textbf{real projective iterated function system} or RPIFS \cite{pifs}.
	
	The $\textbf{Hutchinson operator}$  $W : \mathscr{H}(X) \rightarrow \mathscr{H}(X)$, is defined by
	$$W(B)=\bigcup_{n=1}^N W_n(B) \quad \mbox{for all}~ B \in \mathscr{H}(X).$$
\end{definition}
It is a standard result that if each $W_n$ is a contraction map on $(X,d)$ with contractivity factor $c_n$ for $n=1, 2,\ldots, N$, then the Hutchison operator $W$ is a contraction map with respect to the corresponding Hausd\"{o}rff metric $h_d$ with contractivity factor $c=\max_{n}\{c_n\}$ \cite{FG,FE}. Define $W^0(B) = B$ and let $W^k(B)$ denote the $k$-fold composition of $W$ applied to $B$.
\begin{definition}[see \cite{pifs,mifsonps}]\label{dfnbasin}
	A compact subset $F$ of $(X,d)$ is called an \textbf{attractor}  of the IFS $\mathscr{W}=\{X;W_n:n=1,2,\ldots,N\}$ if
	\begin{enumerate}
		\item $W(F)=F$ and
		\item there exists an  open subset $U$ of $X$ such that $F\subset U$ and
		$$\lim\limits_{k\rightarrow \infty}W^k(B)=F \quad \mbox{for all}~ B\in \mathscr{H}(U),$$
		where the limit is with respect to the Hausd\"{o}rff metric $h_d$ on $\mathscr{H}(X).$	
	\end{enumerate}
\end{definition}
\begin{note}
	The largest open set $U$ in Definition~\ref{dfnbasin} is known as the \textbf{basin} of attraction for the attractor $F$ of the IFS $\mathscr{W}$ and  is denoted by $B(F)$. 
\end{note}
\subsection{Fractal function} Let $I=[a,b]$ and $\Delta$ : $a=x_0<x_1<\cdots<x_N=b$ be a partition of $I$. Let $\{(x_n,y_n)\in\mathbb{R}^2:n=0,1,2,\ldots,N\}$ be the given interpolation points in $\mathbb{R}^2$. Set $I_n=[x_{n-1},x_n]$ for $n=1,2,\ldots,N$. Suppose $L_n:I\rightarrow I_n$ are contraction homeomorphisms such that 
\begin{align}
	\label{lmaps}
	L_n(x_0)&=x_{n-1},~L_n(x_N)=x_n,\\ 
	\vert L_n(x)-L_n(x')\vert&\leq c_n\vert x-x'\vert\quad\mbox{for all}~ x,x'\in I,~\mbox{for some}~0\leq c_n<1.
\end{align}
Further, assume that $F_n:I\times\mathbb{R}\rightarrow\mathbb{R}$ are continuous maps satisfying
\begin{align}\label{fnmaps}
	F_n(x_0,y_0)=y_{n-1},~F_n(x_N,y_N)=y_n,
\end{align}
\begin{align*}
	&\vert F_n(x,y)-F_n(x',y)\vert\leq a_n\vert x-x'\vert\quad\mbox{for all}~ x,x'\in I,\\
	&\vert F_n(x,y)-F_n(x,y')\vert\leq b_n\vert y-y'\vert\quad\mbox{for all}~ y,y'\in\mathbb{R},
\end{align*} 
for some $a_n,b_n\in(-1,1)$. Define the functions $W_n:I\times\mathbb{R}\rightarrow I_n\times\mathbb{R}$ by 
\begin{equation}
	W_n(x,y)=(L_n(x),F_n(x,y)).
\end{equation}
The maps $W_n$ satisfies the join up condition 
\begin{align}
	W_n(x_0,y_0)=(x_{n-1},y_{n-1}),\quad W_n(x_N,y_N)=(x_n,y_n)\quad\mbox{for all}~n=1,2,\ldots,N.
\end{align}
The following is a fundamental result in the theory of fractal interpolation functions.
\begin{theorem}[Barnsley \cite{Ftmapp}]
	Let $C[I]$, the space of all real-valued continuous functions on $I$, be endowed with supremum norm. That is $$\Vert{f}\Vert_{\infty}=\max\left\{\vert f(x)\vert:~x\in I\right\}.$$ Consider the closed subspace 
	\begin{align*}
		C_{y_0,y_N}[I]:=\left\{f\in C[I]:f(x_0)=y_0,~f(x_N)=y_N\right\}.
	\end{align*}
	
	Then the following holds.
	\begin{enumerate}\label{ftoff}	
		\item The IFS $\{(I\times\mathbb{R};W_n):n=1,2,\ldots,N\}$ has unique attractor $G(g)$ which is the graph of a continuous function $g:I\rightarrow \mathbb{R}$ satisfying $g(x_n)=y_n$ for all $n=0,1,\ldots,N$.
		\item The function $g$ is the fixed point of the Read-Bajraktarevic (RB) operator $T:C_{y_0,y_N}[I]\rightarrow C_{y_0,y_N}[I]$ defined by 
		\begin{equation*}
			(Tf)(x)=F_n(L_n^{-1}(x), f(L_n^{-1}(x))),\quad\mbox{for}~x\in I_n;~n=1,2,\ldots,N.
		\end{equation*}
	\end{enumerate}
\end{theorem}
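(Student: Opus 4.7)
The plan is to deduce the theorem from the Banach fixed point theorem applied to the Read--Bajraktarevi\'c operator $T$ on $C_{y_0,y_N}[I]$, and then to identify the graph of the fixed point with the attractor of the IFS $\{I\times\mathbb{R};W_n\}$. The first step is to check that $T$ actually maps $C_{y_0,y_N}[I]$ into itself. Continuity of $Tf$ on each open piece $(x_{n-1},x_n)$ is inherited from the continuity of $F_n$ and the homeomorphism $L_n^{-1}$, so the only real point to verify is agreement at the knots $x_n$. At $x_n$ the left-hand limit equals $F_n(x_N,f(x_N))=F_n(x_N,y_N)=y_n$ and the right-hand limit equals $F_{n+1}(x_0,f(x_0))=F_{n+1}(x_0,y_0)=y_n$ by the join-up relations~\eqref{fnmaps}, and similarly $(Tf)(x_0)=y_0$ and $(Tf)(x_N)=y_N$. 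Hence $T$ is well defined.

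Next I would show $T$ is a contraction. For $f,\tilde f\in C_{y_0,y_N}[I]$ and any $x\in I_n$, the Lipschitz hypothesis on $F_n$ in the second variable gives
\begin{equation*}
|(Tf)(x)-(T\tilde f)(x)|=\bigl|F_n(L_n^{-1}(x),f(L_n^{-1}(x)))-F_n(L_n^{-1}(x),\tilde f(L_n^{-1}(x)))\bigr|\le |b_n|\,\|f-\tilde f\|_\infty,
\end{equation*}
so $\|Tf-T\tilde f\|_\infty\le b\,\|f-\tilde f\|_\infty$ with $b=\max_n |b_n|<1$. Since $C_{y_0,y_N}[I]$ is a closed subspace of the Banach space $(C[I],\|\cdot\|_\infty)$, Banach's contraction principle yields a unique $g\in C_{y_0,y_N}[I]$ with $Tg=g$; in particular $g(x_n)=y_n$ for all $n$, which handles item~(2) of the theorem and half of item~(1).

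For item~(1) I would verify directly that the graph $G(g)=\{(x,g(x)):x\in I\}$ is invariant under the Hutchinson operator $W=\bigcup_n W_n$. If $(x,g(x))\in G(g)$, then $W_n(x,g(x))=(L_n(x),F_n(x,g(x)))$, and writing $x'=L_n(x)\in I_n$ and using $Tg=g$ one sees $F_n(x,g(x))=F_n(L_n^{-1}(x'),g(L_n^{-1}(x')))=(Tg)(x')=g(x')$, so $W_n(x,g(x))\in G(g)$. Conversely, every $(x',g(x'))\in G(g)$ belongs to some $I_n\times\mathbb{R}$ and is the image under $W_n$ of $(L_n^{-1}(x'),g(L_n^{-1}(x')))\in G(g)$. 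Thus $W(G(g))=G(g)$, and $G(g)$ is compact because $g$ is continuous on the compact interval $I$.

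To conclude that $G(g)$ is the unique attractor I would introduce the metric $d_\theta((x,y),(x',y'))=|x-x'|+\theta|y-y'|$ on $I\times\mathbb{R}$ for a sufficiently small $\theta>0$; a short computation using the Lipschitz bounds on $L_n$ and on $F_n$ in both variables shows that each $W_n$ is a contraction in $d_\theta$, so the IFS is hyperbolic and hence (by the standard Hutchinson theorem recalled in Section~\ref{prelimi}) possesses a unique compact attractor, which must equal the compact invariant set $G(g)$. The main obstacle I anticipate is the bookkeeping around the knot points---both in checking continuity of $Tf$ and in choosing $\theta$ so that the off-diagonal Lipschitz constant $a_n$ of $F_n$ in $x$ is absorbed by the contraction factor of $L_n$; once that is handled the rest is routine.
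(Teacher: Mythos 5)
Your proof is correct and follows the standard Barnsley argument: well-definedness of $T$ via the join-up conditions at the knots, contraction of $T$ with factor $\max_n|b_n|<1$, and identification of the graph of the fixed point with the unique attractor of the IFS, made hyperbolic by passing to the auxiliary metric $d_\theta((x,y),(x',y'))=|x-x'|+\theta|y-y'|$ with $\theta$ small. The paper states this result only as a cited preliminary without proof, but its own construction of the projective analogue in Section~\ref{rpfifdef} (RPRB operator, contractivity with factor $\max_n|d_n|$, the equivalent metric $d_\theta$, and graph invariance plus uniqueness of the attractor) follows exactly the same template, so your approach is essentially the one the paper itself uses.
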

The function $g$ is called the fractal interpolation function (FIF) corresponding to the data set $\{(x_n,y_n)\in\mathbb{R}^2:n=0,1,2,\ldots,N\}$.

\section{Decomposition  of the projective plane which avoids a hyperplane}\label{dcopsahp}
Let $A,B$ be the subsets of $ \mathbb{R}$. Since $\mathbb{R}^2$ can be decomposed as $\mathbb{R}\times\mathbb{R}$,  for any function $f:A\to B$ there is a conventional way to define the $graph(f)=\left\{(x,f(x)):x\in A\right\}$ so that it lies on $\mathbb{R}^2$. But if  one considers a function $f:A\to B$, where $A,B$ are the subsets of $ \mathbb{RP}^n$, then there is no traditional way to define the $graph(f)$ for which it lies on $ \mathbb{RP}^m$ for some $m\in\mathbb{N}$. For this reason to define a function whose graph lies on the projective space, a decomposition  is required. In this section, mainly, we provide a decomposition of the projective plane which avoids a hyperplane. We define a norm which induce a metric on it. Also, projective interval and projective rectangle  are defined and some topological results are proved.

Let $\{e_1=(1,0,0), e_2=(0,1,0),e_3=(0,0,1)\}$ be the canonical basis of $\mathbb{R}^3$. Then $\mathbb{H}_{e_i}$ is the hyperplane perpendicular to $e_i$ for $i=1,2,3$. One may consider the space $\mathbb{RP}^2\setminus \mathbb{H}_{e_i}$ which avoids the hyperplane $\mathbb{H}_{e_i}$  for $i=1,2,3$ respectively. In the sequel, we consider the space $\mathbb{RP}^2\setminus \mathbb{H}_{e_3}$ in particular and define two operations $\oplus$ and $\odot$ as follows. For all $(x:y:z),(x':y':z')\in\mathbb{RP}^2\setminus \mathbb{H}_{e_3}$ and for all $a\in\mathbb{R}$,
\begin{equation}\label{addop}
	(x:y:z)\oplus(x':y':z'):=(xz'+x'z:yz'+y'z:zz')
\end{equation}
and
\begin{equation}
	a\odot(x:y:z):=(ax:ay:z).
\end{equation}
Since $z,z'\neq 0$, implies $zz'\neq 0$. So, $(x:y:z)\oplus(x':y':z')\in \mathbb{RP}^2\setminus \mathbb{H}_{e_3}$ and $a\odot(x:y:z)\in \mathbb{RP}^2\setminus \mathbb{H}_{e_3}$. Also, for non-zero reals $\lambda_1$, $\lambda_2$ and $\lambda$,
\begin{align*}
	(\lambda_1x:\lambda_1y:\lambda_1z)&\oplus(\lambda_2x':\lambda_2y':\lambda_2z')\\
	&=\big(\lambda_1\lambda_2(xz'+x'z):\lambda_1\lambda_2(yz'+y'z):\lambda_1\lambda_2zz'\big)\\
	&=(xz'+x'z:yz'+y'z:zz')\\
	&=(x:y:z)\oplus(x':y':z')	
\end{align*}
and
\begin{align*}
	a\odot(\lambda x:\lambda y:\lambda z)&=(a\lambda x:a\lambda y:\lambda z)\\
	&=(ax:ay:z)=a\odot(x:y:z).
\end{align*}
So, both the operations $\oplus$ and $\odot$ are well defined.
\begin{proposition}
	$\mathbb{RP}^2\setminus \mathbb{H}_{e_3}$ forms a vector space over $\mathbb{R}$ with respect to the above defined operations $\oplus$ and $\odot$.
\end{proposition}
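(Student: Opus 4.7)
The plan is to avoid checking the eight vector space axioms one by one, and instead to exhibit the third chart $\phi_3 : \mathbb{RP}^2\setminus\mathbb{H}_{e_3}\to\mathbb{R}^2$ from the standard atlas, namely $\phi_3(x:y:z)=(x/z,\,y/z)$, as a bijection that intertwines $(\oplus,\odot)$ with the usual addition and scalar multiplication on $\mathbb{R}^2$. Since $\mathbb{R}^2$ is already known to be an $\mathbb{R}$-vector space, transporting the structure across $\phi_3$ will then give the claim immediately.

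First I would recall that $\phi_3$ is well-defined on the quotient (its value is unaffected by the $\lambda$-action, as the excerpt already notes for the atlas), that it is a bijection with explicit inverse $(u,v)\mapsto (u:v:1)$, and that its domain of definition is exactly $\mathbb{RP}^2\setminus\mathbb{H}_{e_3}$ because the denominator $z$ is nonzero precisely there. Next I would compute, for arbitrary $(x:y:z),(x':y':z')\in\mathbb{RP}^2\setminus\mathbb{H}_{e_3}$ and $a\in\mathbb{R}$,
\begin{align*}
\phi_3\bigl((x:y:z)\oplus(x':y':z')\bigr)
&=\phi_3(xz'+x'z:yz'+y'z:zz')\\
&=\Bigl(\tfrac{xz'+x'z}{zz'},\,\tfrac{yz'+y'z}{zz'}\Bigr)
=\Bigl(\tfrac{x}{z}+\tfrac{x'}{z'},\,\tfrac{y}{z}+\tfrac{y'}{z'}\Bigr)\\
&=\phi_3(x:y:z)+\phi_3(x':y':z'),
\end{align*}
and similarly $\phi_3(a\odot(x:y:z))=(ax/z,\,ay/z)=a\,\phi_3(x:y:z)$. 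These two identities say that $\oplus$ and $\odot$ are nothing but the pullbacks along $\phi_3$ of the standard operations on $\mathbb{R}^2$.

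From this, every axiom (commutativity and associativity of $\oplus$, the distributive laws, $1\odot u=u$, $(ab)\odot u=a\odot(b\odot u)$) follows for free from the corresponding property in $\mathbb{R}^2$. Moreover, the additive identity must be $\phi_3^{-1}(0,0)=(0:0:1)$, which one verifies directly against the definition of $\oplus$, and the additive inverse of $(x:y:z)$ is $\phi_3^{-1}(-x/z,-y/z)=(-x:-y:z)$, since $(x:y:z)\oplus(-x:-y:z)=(0:0:z^2)=(0:0:1)$.

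The only real obstacle is bookkeeping: at each stage one must be sure the $z$-coordinate stays nonzero, so that the result still lies in $\mathbb{RP}^2\setminus\mathbb{H}_{e_3}$ and $\phi_3$ can be applied. This is automatic because $z,z'\neq 0$ implies $zz'\neq 0$, as the authors already observed just before stating the proposition; so no genuinely new estimate is required, and the proof reduces to the algebraic identities displayed above.
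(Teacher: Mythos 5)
Your proof is correct, but it takes a genuinely different route from the paper. The paper verifies the vector space axioms one by one in homogeneous coordinates: it computes the zero element $(0:0:z)$, the additive inverse $(-x:-y:z)$, and the compatibility and distributive laws for $\odot$ directly from the definitions of $\oplus$ and $\odot$ (leaving commutativity and associativity of $\oplus$ as ``easy to verify''). You instead observe that the chart $\phi_3(x:y:z)=(x/z,y/z)$ from the standard atlas is a bijection onto $\mathbb{R}^2$ that intertwines $(\oplus,\odot)$ with the usual operations, and then transport the structure. Both arguments are sound; yours is shorter and more conceptual, since it explains \emph{why} the operations were defined as they were (they are precisely the pullbacks of addition and scalar multiplication on $\mathbb{R}^2$ along $\phi_3$), and it yields the stronger conclusion that $\mathbb{RP}^2\setminus\mathbb{H}_{e_3}$ is isomorphic to $\mathbb{R}^2$ as a vector space --- an observation that also streamlines the paper's later completeness proof, because $d_{\mathbb{P}}$ is exactly the pullback of the Euclidean metric under $\phi_3$. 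The paper's axiom-by-axiom computation buys self-containedness and keeps all manipulations inside the projective coordinates that the rest of the construction uses, at the cost of length. Your only bookkeeping obligation --- that $zz'\neq 0$ so that results stay in the domain of $\phi_3$ --- is handled exactly as the paper handles it, so there is no gap.
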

\begin{proof}
	It is easy to verify that $\oplus$ is commutative as well as associative in $\mathbb{RP}^2\setminus \mathbb{H}_{e_3}$. For all $(x_1:y_1:z_1)\in\mathbb{RP}^2\setminus \mathbb{H}_{e_3}$ and $(0:0:z)\in\mathbb{RP}^2\setminus \mathbb{H}_{e_3}$,
	\begin{equation}
		(x_1:y_1:z_1)\oplus(0:0:z)=(x_1z:y_1z:z_1z)=(x_1:y_1:z_1).
	\end{equation}
	Hence $(0:0:z)$ is the zero element in $\mathbb{RP}^2\setminus \mathbb{H}_{e_3}$. Also, for all $(x:y:z)\in\mathbb{RP}^2\setminus \mathbb{H}_{e_3}$,
	\begin{equation}
		(x:y:z)\oplus(-x:-y:z)=(0:0:z).
	\end{equation}
	Therefore, $(-x:-y:z)$ is the additive inverse of $(x:y:z)$ in $\mathbb{RP}^2\setminus \mathbb{H}_{e_3}$. So, $\big(\mathbb{RP}^2\setminus \mathbb{H}_{e_3},~\oplus\big)$ forms a commutative group. Now, for all $(x_1:y_1:z_1),(x_2:y_2:z_2)\in\mathbb{RP}^2\setminus \mathbb{H}_{e_3}$ and for all $a,b\in \mathbb{R}$,
	\begin{equation*}
		ab\odot(x_1:y_1:z_1):=(abx_1:aby_1:z_1)=a\odot(bx_1:by_1:z_1)=a\odot\big( b\odot (x_1:y_1:z_1)\big),
	\end{equation*}
	\begin{align*}
		a\odot(x_1:y_1:z_1)\oplus a\odot(x_2:y_2:z_2)=&(ax_1:ay_1:z_1)\oplus (ax_2:ay_2:z_2)\\
		=&\big(a(x_1z_2+x_2z_1):a(y_1z_2+y_2z_1):z_1z_2\big)\\
		=&a\odot \big((x_1:y_1:z_1)\oplus(x_2:y_2:z_2)\big)
	\end{align*}
	and
	\begin{align*}
		a\odot(x_1:y_1:z_1)\oplus b\odot(x_1:y_1:z_1)=&(ax_1:ay_1:z_1)\oplus (bx_1:by_1:z_1)\\
		=&\Big((a+b)x_1z_1:(a+b)y_1z_1:z_1^2\Big)\\
		=&(a+b)\odot(x_1:y_1:z_1).
	\end{align*}
	Hence $\big(\mathbb{RP}^2\setminus \mathbb{H}_{e_3},~\oplus,~\odot\big)$ forms a vector space over $\mathbb{R}$.		
\end{proof}
\begin{remark}
	Note that in $\mathbb{RP}^2\setminus \mathbb{H}_{e_3}$ the $z$-axis (that is the line $x=0$, $y=0$) is the zero element. Simply, we denote it by $(0:0:\lambda)$, $\lambda\neq 0$.
\end{remark}
We use the notation $\ominus$ to indicate the difference between the two elements in $\mathbb{RP}^2\setminus \mathbb{H}_{e_3}$. That is if $(x_1:y_1:z_1),(x_2:y_2:z_2)\in\mathbb{RP}^2\setminus \mathbb{H}_{e_3}$, then $(x_1:y_1:z_1)\ominus(x_2:y_2:z_2)=(x_1z_2-x_2z_1:y_1z_2-y_2z_1:z_1z_2)$. So, each element $(x:y:z)$ in $\mathbb{RP}^2\setminus \mathbb{H}_{e_3}$ can be expressed as a sum of two of its elements  namely, $(x:0:z)$ and $(0:y:z)$. That is $(x:y:z)=(x:0:z)\oplus(0:y:z)$. Let $\mathbb{H}_{10}:=\left\{(x:0:z)\in\mathbb{RP}^2\setminus \mathbb{H}_{e_3}\right\}$ and $\mathbb{H}_{01}:=\left\{(0:y:z)\in\mathbb{RP}^2\setminus \mathbb{H}_{e_3}\right\}$. Then  $\mathbb{RP}^2\setminus \mathbb{H}_{e_3}$ can be expressed as
\begin{equation}\label{decoofrp2}
	\mathbb{RP}^2\setminus \mathbb{H}_{e_3}=\mathbb{H}_{10}\oplus \mathbb{H}_{01}.
\end{equation}
For the existence of an attractor of a contractive  RPIFS, we need to define a norm on $\mathbb{RP}^2\setminus \mathbb{H}_{e_3}$ for which the space $\mathbb{RP}^2\setminus \mathbb{H}_{e_3}$ becomes a complete normed linear space. For this purpose we define the real projective norm on $\mathbb{RP}^2\setminus \mathbb{H}_{e_3}$  as follows:
\begin{equation}
	\Vert(x:y:z)\Vert_{\mathbb{P}}:=\frac{\sqrt{x^2+y^2}}{\vert z\vert}
\end{equation}
for all $(x:y:z)\in\mathbb{RP}^2\setminus \mathbb{H}_{e_3}$. Since for $\lambda\neq 0$,
\begin{align*}
	\Vert(\lambda x:\lambda y:\lambda z)\Vert_{\mathbb{P}}=\frac{\sqrt{(\lambda x)^2+(\lambda y)^2}}{\vert\lambda z\vert}
	=\frac{\sqrt{x^2+y^2}}{\vert z\vert}
	=\Vert(x:y:z)\Vert_{\mathbb{P}}.
\end{align*}
So, $	\Vert \cdot\Vert_{\mathbb{P}}$ is well defined on  $\mathbb{RP}^2\setminus \mathbb{H}_{e_3}$.
We define the real projective metric $d_{\mathbb{P}}$ on $\mathbb{RP}^2\setminus \mathbb{H}_{e_3}$ as
\begin{equation}\label{pms}
	d_{\mathbb{P}}\big((x:y:z),(x':y':z')\big):=\Vert(xz'-x'z:yz'-y'z:zz')\Vert_{\mathbb{P}}
\end{equation}
for all $(x:y:z),(x':y':z')\in\mathbb{RP}^2\setminus \mathbb{H}_{e_3}$. Then
$\big(\mathbb{RP}^2\setminus \mathbb{H}_{e_3},d_{\mathbb{P}}\big)$ forms a metric space. It is clear that the projective metric $d_{\mathbb{P}}$ is neither equal to the ``round" metric  nor equal to the ``Hilbert" metric.
\begin{theorem}
	The metric space $\big(\mathbb{RP}^2\setminus \mathbb{H}_{e_3},d_{\mathbb{P}}\big)$ is complete.
\end{theorem}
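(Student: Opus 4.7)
The plan is to recognize that the projective metric $d_{\mathbb{P}}$ on $\mathbb{RP}^2\setminus \mathbb{H}_{e_3}$ is, modulo the canonical choice of representative with third coordinate equal to $1$, just the Euclidean metric on $\mathbb{R}^2$, and then deduce completeness by transport.

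Concretely, I would first observe that every point $p=(x:y:z) \in \mathbb{RP}^2\setminus \mathbb{H}_{e_3}$ has a \emph{unique} representative of the form $(a:b:1)$ with $a=x/z$ and $b=y/z$ (this is just the chart $\phi_3$ recalled in Section~\ref{prelimi}). Substituting $z=z'=1$ in formula \eqref{pms}, one obtains
\begin{equation*}
d_{\mathbb{P}}\bigl((a:b:1),(a':b':1)\bigr)=\Vert(a-a':b-b':1)\Vert_{\mathbb{P}}=\sqrt{(a-a')^2+(b-b')^2}.
\end{equation*}
Hence the map $\Phi:\mathbb{RP}^2\setminus \mathbb{H}_{e_3}\to\mathbb{R}^2$ sending $(x:y:z)$ to $(x/z,y/z)$ is a bijection that pulls the Euclidean metric back to $d_{\mathbb{P}}$; in particular $\Phi$ is an isometry.

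Given this, I would take an arbitrary Cauchy sequence $\{p_n\}$ in $\bigl(\mathbb{RP}^2\setminus \mathbb{H}_{e_3},d_{\mathbb{P}}\bigr)$, choose the canonical representatives $p_n=(a_n:b_n:1)$, and note that $\{(a_n,b_n)\}$ is then Cauchy in the Euclidean space $\mathbb{R}^2$. By the completeness of $(\mathbb{R}^2,\|\cdot\|_2)$, there exists $(a,b)\in\mathbb{R}^2$ with $(a_n,b_n)\to(a,b)$. Setting $p:=(a:b:1)\in\mathbb{RP}^2\setminus \mathbb{H}_{e_3}$, the isometry identity above gives $d_{\mathbb{P}}(p_n,p)\to 0$, which shows that $p_n\to p$ in the projective metric and therefore that the space is complete.

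The only delicate point is well-definedness: I must be careful that taking the representative with $z=1$ does not interact badly with the equivalence relation. This is handled by the homogeneity arguments already carried out for $\oplus$, $\odot$, and $\Vert\cdot\Vert_{\mathbb{P}}$; since $z_n\neq 0$ for each $n$ (points lie outside $\mathbb{H}_{e_3}$), dividing by $z_n$ is legitimate and independent of the initially chosen representative. No further analytic obstacle arises, because all the work has been pushed into the identification $\Phi$, after which completeness is borrowed wholesale from $\mathbb{R}^2$.
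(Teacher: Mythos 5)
Your proposal is correct and is essentially the paper's own argument: the authors likewise observe that $d_{\mathbb{P}}\bigl((x:y:z),(x':y':z')\bigr)=\sqrt{(x/z-x'/z')^2+(y/z-y'/z')^2}$, extract the Cauchy sequences $(x_n/z_n)$ and $(y_n/z_n)$ in $\mathbb{R}$, and pass back to a limit point $(\lambda x:\lambda y:\lambda)$. You merely package the same computation as an explicit isometry $\Phi$ with $(\mathbb{R}^2,\Vert\cdot\Vert_2)$, which is a clean but equivalent formulation.
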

\begin{proof}
	Let $(x_n:y_n:z_n)$ be a Cauchy sequnce in $\mathbb{RP}^2\setminus \mathbb{H}_{e_3}$ and let $\epsilon>0$. Then there exists a natural number $K$ such that
	\begin{align*}
		d_{\mathbb{P}}\big((x_n:y_n:z_n),(x_m:y_m:z_m)\big)<\epsilon\quad\mbox{for all}~n,m>K.
	\end{align*}
	This implies
	\begin{align*}
		\sqrt{\left(\frac{x_n}{z_n}-\frac{x_m}{z_m}\right)^2+\left(\frac{y_n}{z_n}-\frac{y_m}{z_m}\right)^2}<\epsilon\quad\mbox{for all}~n,m>K.
	\end{align*}
	This shows that $(\frac{x_n}{z_n})$ and $(\frac{y_n}{z_n})$ are Cauchy sequences in $\mathbb{R}$. So, there exist $x$ and $y$ in $\mathbb{R}$ such that $\frac{x_n}{z_n}\to x$ and $\frac{y_n}{z_n}\to y$. Now, for $\lambda\neq 0$
	\begin{align*}
		d_{\mathbb{P}}\big((x_n:y_n:z_n),(\lambda x:\lambda y:\lambda)\big)=\sqrt{\left(\frac{x_n}{z_n}-x\right)^2+\left(\frac{y_n}{z_n}-y\right)^2}.
	\end{align*}
	\\This shows that the sequence $(x_n:y_n:z_n)$  coverges to $(\lambda x:\lambda y:\lambda)$ on $\big(\mathbb{RP}^2\setminus \mathbb{H}_{e_3},d_{\mathbb{P}}\big)$. Hence $\big(\mathbb{RP}^2\setminus \mathbb{H}_{e_3},d_{\mathbb{P}}\big)$ is a complete metric space.
\end{proof}	
Before going to the  further discussions, we introduce some notation. For $(x_1:0:z_1),(x_2:0:z_2)\in \mathbb{H}_{10}$, we say that  $(x_1:0:z_1)\preceq (x_2:0:z_2)$, if and only if $x_1z_2\leq x_2z_1$, and $(x_1:0:z_1)\prec (x_2:0:z_2)$, if and only if $x_1z_2<x_2z_1$. Similarly for $(0:y_1:z_1),(0:y_2:z_2)\in \mathbb{H}_{01}$, we define $(0:y_1:z_1)\preceq(0:y_2:z_2)$, if and only if  $y_1z_2\leq y_2z_1$, and $(0:y_1:z_1)\prec(0:y_2:z_2)$, if and only if  $y_1z_2<y_2z_1$. Also, the product of two elements $(x_1:y_1:z_1)$ and $(x_2:y_2:z_2)$ in $\mathbb{RP}^2\setminus \mathbb{H}_{e_3}$ is defined by $(x_1:y_1:z_1)(x_2:y_2:z_2):=(x_1x_2:y_1y_2:z_1z_2)$.
\begin{definition}[Projective intervals on $\mathbb{H}_{10}$ and $\mathbb{H}_{01}$]
	Let $(a_1:0:c_1),(a_2:0:c_2)\in \mathbb{H}_{10}$ be such that $(a_1:0:c_1)\prec(a_2:0:c_2)$. Then the projective interval (see Figure~\ref{fig3}) on $\mathbb{H}_{10}$, is denoted  by $\mathbb{P}_{I\times\{0\}}$, and is defined by
	\begin{align*}
		\mathbb{P}_{I\times\{0\}}:=\bigg\{(x:0:z)\in\mathbb{H}_{10}:~(a_1:0:c_1)\preceq(x:0:z)\preceq(a_2:0:c_2) \bigg\}.
	\end{align*}
	One can define the projective interval on $\mathbb{H}_{01}$ in similar fashion.
\end{definition}
\begin{definition}[Projective rectangle]
	Let $(a_1:0:c_1),(a_2:0:c_2)\in \mathbb{H}_{10}$ and $(0:b_1:d_1),(0:b_2:d_2)\in \mathbb{H}_{01}$ be such that $(a_1:0:c_1)\prec(a_2:0:c_2)$ and $(0:b_1:d_1)\prec(0:b_2:d_2)$. Then the projective rectangle (see Figure~\ref{fig2}) on $\mathbb{RP}^2\setminus \mathbb{H}_{e_3}$ is defined by
	\begin{align*}
		\mathbb{P}_{I\times J}:=\bigg\{(x:y:z)\in\mathbb{RP}^2\setminus \mathbb{H}_{e_3}&:(a_1:0:c_1)\preceq(x:0:z)\preceq(a_2:0:c_2)\bigg.\\
		&\bigg. \mbox{and}~(0:b_1:d_1)\preceq(0:y:z)\preceq(0:b_2:d_2) \bigg\}.
	\end{align*}
\end{definition}
\begin{figure}[ht!]
	\centering
	\includegraphics[width=6cm, height=3cm]{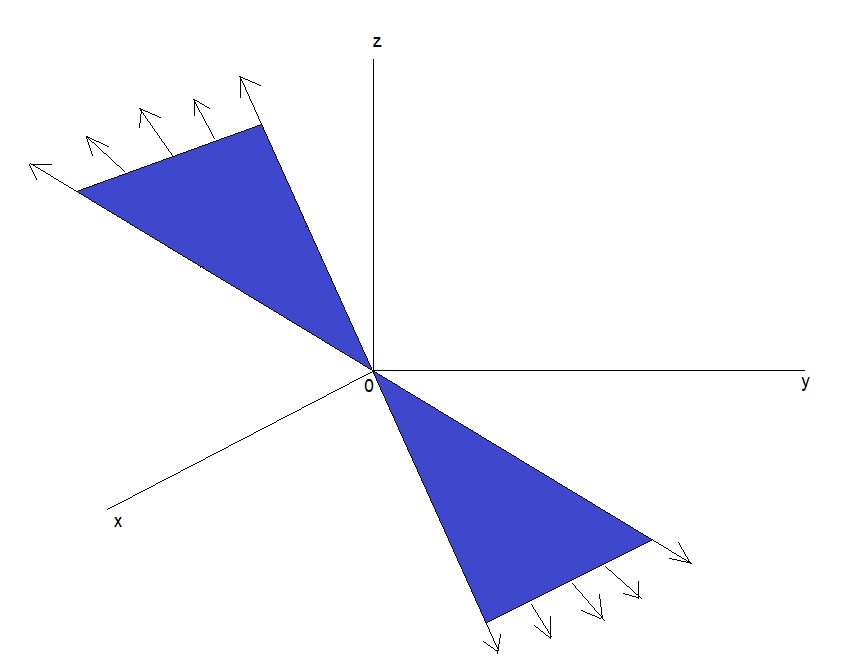}
	\caption{Projective interval.} \label{fig3}
\end{figure}
\begin{figure}[h!]
	\centering
	\includegraphics[width=5cm, height=5cm]{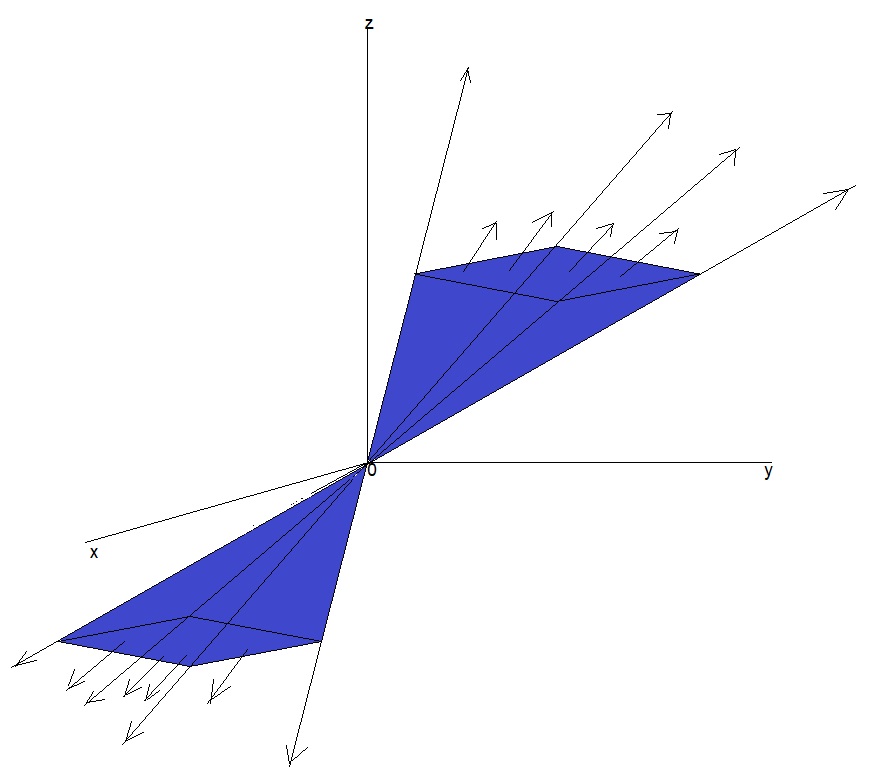}
	\caption{Projective rectangle.} \label{fig2}
\end{figure}

\begin{lemma}
	Projective intervals and projective rectangles are compact subsets of $\mathbb{RP}^2\setminus \mathbb{H}_{e_3}$ with respect to the metric $d_{\mathbb{P}}$.
\end{lemma}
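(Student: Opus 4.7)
The plan is to transport the problem to the familiar Euclidean setting via the map $\phi: \mathbb{RP}^2 \setminus \mathbb{H}_{e_3} \to \mathbb{R}^2$ defined by $\phi(x:y:z) = (x/z, y/z)$, which is nothing other than the chart $\phi_3$ of the standard atlas introduced in the preliminaries. First I would observe, directly from the definition of $d_{\mathbb{P}}$ in \eqref{pms}, that
\[
d_{\mathbb{P}}\big((x:y:z),(x':y':z')\big) = \sqrt{\Big(\tfrac{x}{z} - \tfrac{x'}{z'}\Big)^2 + \Big(\tfrac{y}{z} - \tfrac{y'}{z'}\Big)^2},
\]
an identity already visible inside the proof of the preceding completeness theorem. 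Hence $\phi$ is a bijective isometry between $(\mathbb{RP}^2 \setminus \mathbb{H}_{e_3}, d_{\mathbb{P}})$ and $(\mathbb{R}^2, \|\cdot\|_2)$, so it transfers compactness between the two spaces.

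Next, I would convert the ordering $\preceq$ into the usual order on $\mathbb{R}$. Choosing homogeneous coordinates with positive third entry, which can always be arranged by multiplying by $-1$, the condition $(a_1{:}0{:}c_1) \preceq (a_2{:}0{:}c_2)$, i.e.\ $a_1 c_2 \leq a_2 c_1$, becomes $a_1/c_1 \leq a_2/c_2$; the same normalization works on $\mathbb{H}_{01}$. Therefore
\[
\phi(\mathbb{P}_{I \times \{0\}}) = [a_1/c_1,\, a_2/c_2] \times \{0\}, \qquad \phi(\mathbb{P}_{I \times J}) = [a_1/c_1,\, a_2/c_2] \times [b_1/d_1,\, b_2/d_2],
\]
which are closed bounded subsets of $\mathbb{R}^2$ and therefore compact by the Heine--Borel theorem. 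Pulling back through the isometry $\phi^{-1}$, the projective interval $\mathbb{P}_{I \times \{0\}}$ and the projective rectangle $\mathbb{P}_{I \times J}$ are compact in $(\mathbb{RP}^2 \setminus \mathbb{H}_{e_3}, d_{\mathbb{P}})$.

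The main subtlety is the sign bookkeeping when translating $\preceq$ into the standard order on $\mathbb{R}$: the inequality $a_1 c_2 \leq a_2 c_1$ is not invariant under multiplying one representative by a negative scalar, so the normalization to positive $z$-coordinate representatives is essential. Once that choice is fixed, the rest of the argument is just the observation that closed rectangles in Euclidean space are compact, combined with the isometry property of the chart.
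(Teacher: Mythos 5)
Your proof is correct. Note, though, that the paper's own ``proof'' of this lemma is a single sentence deferring entirely to the definitions, so there is no real argument to compare against; what you have written is the natural way to fill that gap. The key identity
\[
d_{\mathbb{P}}\big((x:y:z),(x':y':z')\big)=\sqrt{\Big(\tfrac{x}{z}-\tfrac{x'}{z'}\Big)^2+\Big(\tfrac{y}{z}-\tfrac{y'}{z'}\Big)^2}
\]
is indeed already used implicitly in the paper's completeness theorem, so identifying $\phi_3$ as a surjective isometry onto $(\mathbb{R}^2,\Vert\cdot\Vert_2)$ and invoking Heine--Borel is exactly the intended mechanism. Your remark about the sign normalization is a genuinely valuable addition rather than pedantry: as written in the paper, the relation $(a_1{:}0{:}c_1)\preceq(a_2{:}0{:}c_2)$ defined by $a_1c_2\leq a_2c_1$ reverses if one representative is multiplied by a negative scalar, so the projective interval and rectangle are only well defined once one fixes representatives with positive third coordinate (or some equivalent convention). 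The paper never addresses this, and your proof both flags the issue and resolves it correctly.
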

\begin{proof}
	The proof follows from the definitions of the projective interval and the projective rectangle respectively.
\end{proof}
Let
\begin{equation}
	\mathscr{C}[\mathbb{P}_{I\times\{0\}}]=\bigg\{f: f:\mathbb{P}_{I\times\{0\}}\to\mathbb{H}_{01}~\mbox{is continuous}\bigg\}.
\end{equation}
If $f\in \mathscr{C}[\mathbb{P}_{I\times\{0\}}]$, define $\Vert f \Vert_{\mathbb{P}\infty}:=\sup\{\Vert f(x:0:z)\Vert_{\mathbb{P}}:~(x:0:z)\in \mathbb{P}_{I\times\{0\}}\}$. Since $\mathbb{P}_{I\times\{0\}}$ is compact so, $\Vert f \Vert_{\mathbb{P}\infty}$ is well defined.
\begin{remark}
	The space $\mathscr{C}[\mathbb{P}_{I\times\{0\}}]$ forms a normed linear space, where the addition is defined by $(f\oplus g)(x:0:z)=f(x:0:z)\oplus g(x:0:z)$ and the multiplication is defined by $(\alpha\odot f)(x:0:z)=\alpha\odot f(x:0:z)$.
\end{remark}
\begin{lemma}\label{comnessc01}
	$\big(\mathscr{C}[\mathbb{P}_{I\times\{0\}}],\Vert \cdot \Vert_{\mathbb{P}\infty}\big)$ is a complete normed linear space.
\end{lemma}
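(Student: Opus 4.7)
The plan is to mimic the classical proof that the space of continuous functions from a compact space to a complete metric space, equipped with the sup norm, is a Banach space. I take a Cauchy sequence $(f_k)$ in $\bigl(\mathscr{C}[\mathbb{P}_{I\times\{0\}}],\Vert \cdot \Vert_{\mathbb{P}\infty}\bigr)$, build a pointwise limit $f$, and then show (i) $f$ takes values in $\mathbb{H}_{01}$, (ii) $f$ is continuous, and (iii) $f_k\to f$ in the $\Vert\cdot\Vert_{\mathbb{P}\infty}$ norm.

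First, for each fixed $(x:0:z)\in\mathbb{P}_{I\times\{0\}}$, I would use the inequality $\Vert f_k(x:0:z)\ominus f_l(x:0:z)\Vert_{\mathbb{P}}\le \Vert f_k\ominus f_l\Vert_{\mathbb{P}\infty}$ to conclude that $(f_k(x:0:z))$ is Cauchy in the metric $d_{\mathbb{P}}$, which is complete by the preceding theorem. A small lemma (which I would verify from the formula for $d_{\mathbb{P}}$ in equation \eqref{pms}) is that $\mathbb{H}_{01}$ is closed in $\mathbb{RP}^2\setminus \mathbb{H}_{e_3}$: indeed, a convergence $(0:y_n:z_n)\to (x:y:z)$ forces $d_{\mathbb{P}}((0:y_n:z_n),(x:y:z))^2\ge (x/z)^2$, so the limit must have first coordinate $0$. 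Hence $\mathbb{H}_{01}$ is itself complete, and the pointwise limit $f(x:0:z):=\lim_k f_k(x:0:z)$ lies in $\mathbb{H}_{01}$.

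Second, I would upgrade pointwise convergence to uniform convergence by the usual argument: given $\epsilon>0$, pick $N$ with $\Vert f_k\ominus f_l\Vert_{\mathbb{P}\infty}<\epsilon$ whenever $k,l\ge N$. For each $(x:0:z)$, letting $l\to\infty$ and using continuity of $\Vert\cdot\Vert_{\mathbb{P}}$ (which is immediate from its definition as a Euclidean-type expression in the affine coordinates $x/z, y/z$) yields
\begin{equation*}
\Vert f_k(x:0:z)\ominus f(x:0:z)\Vert_{\mathbb{P}}\le \epsilon \quad \text{for all } k\ge N,~ (x:0:z)\in \mathbb{P}_{I\times\{0\}}.
\end{equation*}
Taking the supremum over $(x:0:z)$ gives $\Vert f_k\ominus f\Vert_{\mathbb{P}\infty}\le\epsilon$, which is the desired convergence in norm, provided $f$ is continuous.

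Continuity of $f$ is the routine three-$\epsilon$ argument: split $\Vert f(p)\ominus f(q)\Vert_{\mathbb{P}}$ as $\Vert f(p)\ominus f_N(p)\Vert_{\mathbb{P}}+\Vert f_N(p)\ominus f_N(q)\Vert_{\mathbb{P}}+\Vert f_N(q)\ominus f(q)\Vert_{\mathbb{P}}$, control the outer two terms by the uniform estimate just obtained, and the middle term by continuity of $f_N$. To make this splitting rigorous I need that $\Vert\cdot\Vert_{\mathbb{P}}$ satisfies a triangle inequality on $\mathbb{H}_{01}$ compatible with $\ominus$; this follows directly from identifying $(0:y:z)\mapsto y/z$ as an isometric linear bijection of $(\mathbb{H}_{01},\oplus,\odot,\Vert\cdot\Vert_{\mathbb{P}})$ with $\mathbb{R}$ equipped with its usual absolute value, which is the one point where I would pause to check the algebra carefully. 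With this identification the whole proof reduces to the classical completeness of $C(K,\mathbb{R})$ in the sup norm, and the main conceptual obstacle — that the codomain $\mathbb{H}_{01}$ is a quotient of lines in $\mathbb{R}^3$ rather than a subset of an ambient Euclidean space — is dissolved.
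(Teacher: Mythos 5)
Your proposal is correct and follows essentially the same route as the paper: pointwise limits exist because $\mathbb{H}_{01}$ is a closed (hence complete) subset of the complete space $\big(\mathbb{RP}^2\setminus \mathbb{H}_{e_3},d_{\mathbb{P}}\big)$, the convergence is then upgraded to uniform convergence in $\Vert\cdot\Vert_{\mathbb{P}\infty}$, and continuity of the limit follows from uniform convergence. The only difference is that you supply details the paper leaves implicit (the closedness of $\mathbb{H}_{01}$, the three-$\epsilon$ argument, and the isometric identification $(0:y:z)\mapsto y/z$ of $\mathbb{H}_{01}$ with $\mathbb{R}$), all of which check out.
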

\begin{proof}
	Let $(f_n)$ be a Cauchy sequence in $\mathscr{C}[\mathbb{P}_{I\times\{0\}}]$ and let $\epsilon>0$. Then there exists a natural number $k_0$ such that
	$$\lVert f_n\ominus f_m\lVert_{\mathbb{P}\infty}<\epsilon$$ for $n,m>k_0$. Then for each $(x:0:z)\in \mathbb{P}_{I\times\{0\}}$,
	\begin{equation}\label{cauchyforex}
		\Vert f_n(x:0:z)\ominus f_m(x:0:z)\Vert_{\mathbb{P}}<\epsilon
	\end{equation}
	for $n,m>k_0$. Therefore, $\big(f_n(x:0:z)\big)$ is Cauchy in $\mathbb{H}_{01}$. As $\mathbb{H}_{01}$ is closed in  $\big(\mathbb{RP}^2\setminus \mathbb{H}_{e_3},d_{\mathbb{P}}\big)$, $\mathbb{H}_{01}$ is complete. So, $\big(f_n(x:0:z)\big)$ converges to a point $(0:v:w)$. Define a function $f$ on $\mathbb{P}_{I\times\{0\}}$ by $f(x:0:z)=(0:v:w)$. Now if $m$ is large enough, then from (\ref{cauchyforex}),
	\begin{equation*}
		\Vert f(x:0:z)\ominus f_m(x:0:z)\Vert_{\mathbb{P}}=\lim\limits_{n\to \infty} \Vert f_n(x:0:z)\ominus f_m(x:0:z)\Vert_{\mathbb{P}}<\epsilon.
	\end{equation*}
	This is true for each $(x:0:z)\in \mathbb{P}_{I\times\{0\}}$. So,
	\begin{equation*}
		\sup_{(x:0:z)\in \mathbb{P}_{I\times\{0\}}}\Vert f(x:0:z)\ominus f_m(x:0:z)\Vert_{\mathbb{P}}\leq\epsilon.
	\end{equation*}
	Therefore, $$\lVert f\ominus f_m\lVert_{\mathbb{P}\infty}\leq\epsilon\quad\mbox{as}~m\to\infty.$$ The continuity of $f$  follows from the continuity of $f_n$.  Therefore, $f\in \mathscr{C}[\mathbb{P}_{I\times\{0\}}]$. Hence  $\big(\mathscr{C}[\mathbb{P}_{I\times\{0\}}],\Vert \cdot \Vert_{\mathbb{P}\infty}\big)$ is complete.
\end{proof}

\section{Real projective fractal interpolation function}\label{rpfifdef}
In this section, we construct the real projective fractal interpolation function  passing through certain data points on $\mathbb{RP}^2\setminus \mathbb{H}_{e_3}$. \\
Let $N\geq 2$ and $\left\{(x_n:y_n:z_n)\in\mathbb{RP}^2\setminus \mathbb{H}_{e_3}:~n=0,1,\ldots, N\right\}$ be a data set in $\mathbb{RP}^2\setminus \mathbb{H}_{e_3}$ such that $x_nz_{n+1}<x_{n+1}z_n$ for $n=0,1,\ldots, N-1$. Let $\mathbb{P}_{I\times\{0\}}:=\bigg\{(x:0:z)\in\mathbb{H}_{10}:~(x_0:0:z_0)\preceq(x:0:z)\preceq(x_N:0:z_N) \bigg\}$ and $\mathbb{P}_{I_{n}\times\{0\}}:=\bigg\{(x:0:z)\in\mathbb{H}_{10}:~(x_{n-1}:0:z_{n-1})\preceq(x:0:z)\preceq(x_n:0:z_n) \bigg\}$ for $n=1,2,\ldots, N$. For $n=1,2,\ldots, N$, consider the transformations $L_n:\mathbb{P}_{\mathbb{I}\times\{0\}}\to \mathbb{P}_{\mathbb{I}_n\times\{0\}}$ given by $L_n(x:0:z)=(a_nx+b_nz:0:z)$ such that 
\begin{equation}\label{joufl_n}
	L_n(x_0:0:z_0)=(x_{n-1}:0:z_{n-1})~\mbox{and}~L_n(x_N:0:z_N)=(x_{n}:0:z_{n}),
\end{equation}
where $a_n,b_n\in\mathbb{R}$. The constants $ a_n$ and $b_n$ are determined
by the condition (\ref{joufl_n}) as
\begin{align*}
	a_n=\frac{\frac{x_n}{z_n}-\frac{x_{n-1}}{z_{n-1}}}{\frac{x_N}{z_N}-\frac{x_0}{z_0}}\quad\mbox{and}\quad b_n=\frac{\frac{x_N}{z_N}\frac{x_{n-1}}{z_{n-1}}-\frac{x_{0}}{z_{0}}\frac{x_{n}}{z_{n}}}{\frac{x_N}{z_N}-\frac{x_0}{z_0}}.
\end{align*}
It is clear that $\vert a_n\vert<1$. Also,	 
\begin{align}\label{conolnms}
	d_{\mathbb{P}}\big(L_n(x:0:z),L_n(x':0:z')\big)=&d_{\mathbb{P}}\big((a_nx+b_nz:0:z),(a_nx'+b_nz':0:z')\big)\\
	\nonumber	=&\frac{\sqrt{\big((a_nx+b_nz)z'-(a_nx'+b_nz')z\big)^2}}{\vert zz'\vert}\\
	\nonumber	=&\vert a_n\vert\frac{\vert xz'-x'z\vert}{\vert zz'\vert}=\vert a_n\vert d_{\mathbb{P}}\big((x:0:z),(x':0:z')\big).
\end{align}
So, $L_n$'s are contraction maps. Also, for $n=1,2,\ldots, N$, consider the continuous maps $F_n:\mathbb{RP}^2\setminus \mathbb{H}_{e_3}\to\mathbb{H}_{01}$ given by
\begin{equation}\label{fnitsjoin}
	F_n(x:y:z)=\big(0:c_nx+d_ny+f_nz:z\big)
\end{equation}
such that 
\begin{equation}\label{joucofn}
	F_n(x_0:y_0:z_0)=(0:y_{n-1}:z_{n-1})\quad\mbox{and}\quad F_n(x_N:y_N:z_N)=(0:y_{n}:z_{n}),
\end{equation}
where $c_n,d_n,f_n\in\mathbb{R}$. The real constants $c_n$ and $f_n$ are determined by the condition (\ref{joucofn}) as
\begin{align*}
	c_n=\frac{\frac{y_n}{z_n}-\frac{y_{n-1}}{z_{n-1}}}{\frac{x_N}{z_N}-\frac{x_0}{z_0}}-d_n\frac{\frac{y_N}{z_N}-\frac{y_{0}}{z_{0}}}{\frac{x_N}{z_N}-\frac{x_0}{z_0}}\quad\mbox{and}\\ f_n=\frac{\frac{x_N}{z_N}\frac{y_{n-1}}{z_{n-1}}-\frac{x_{0}}{z_{0}}\frac{y_{n}}{z_{n}}}{\frac{x_N}{z_N}-\frac{x_0}{z_0}}-d_n\frac{\frac{x_N}{z_N}\frac{y_{0}}{z_{0}}-\frac{x_{0}}{z_{0}}\frac{y_{N}}{z_{N}}}{\frac{x_N}{z_N}-\frac{x_0}{z_0}}.
\end{align*}
Here, $d_n$'s are the free parameters.
Also, we get the following.
\begin{align}\label{conoffmswrfc}
	d_\mathbb{P}&\left(F_n\big((x:0:z)\oplus(0:y:z)\big),F_n\big((x':0:z')\oplus(0:y:z)\big)\right)\\
	\nonumber	     &=d_\mathbb{P}\left(F_n\big((x:y:z),F_n(x'z:yz':zz')\big)\right)\\
	\nonumber		&=d_\mathbb{P}\left(\big(0:c_nx+d_ny+f_nz:z\big),\big(0:c_nx'z+d_nyz'+f_nzz':zz'\big)\right)\\
	\nonumber&=\frac{\sqrt{\big((c_nx+d_ny+f_nz)zz'-(c_nx'z+d_nyz'+f_nzz')z\big)^2}}{\vert z^2z'\vert}\\
	\nonumber		&=\frac{\sqrt{\big(c_nxzz'-c_nx'z^2\big)^2}}{\vert z^2z'\vert}\\
	\nonumber	&=\vert c_n\vert\frac{\vert xz'-x'z\vert}{\vert zz'\vert}=\vert c_n\vert d_{\mathbb{P}}\big((x:0:z),(x':0:z')\big).
\end{align}
Similarly, we have 
\begin{align}\label{conffnswrscs}
	d_\mathbb{P}&\left(F_n\big((x:0:z)\oplus(0:y:z)\big),F_n\big((x:0:z)\oplus(0:y':z')\big)\right)\\
	&=\vert d_n\vert d_{\mathbb{P}}\big((0:y:z),(0:y':z')\big).
\end{align}
This shows that $F_n$'s are Lipschitz.
Now, for $n=1,2,\ldots,N$, define the functions $W_n:\mathbb{RP}^2\setminus \mathbb{H}_{e_3}\to\mathbb{RP}^2\setminus \mathbb{H}_{e_3}$ by 
\begin{equation}\label{protfs}
	W_n(x:y:z)=L_n(x:0:z)\oplus F_n(x:y:z).
\end{equation}
Then the maps $W_n$ can also be expressed as 
\begin{align}\label{expofrpifs}
	W_n(x:y:z)=&(a_nx+b_nz:0:z)\oplus (0:c_nx+d_ny+f_nz:z)\\
	\nonumber	=&(a_nx+b_nz:c_nx+d_ny+f_nz:z)\\
	\nonumber		=&(a_nx:c_nx+d_ny:z)\oplus (b_n:f_n:1)\\
	\nonumber		=&\begin{pmatrix}
		a_n & 0 & 0\\
		c_n & d_n & 0\\
		0 & 0 & 1
	\end{pmatrix} \begin{pmatrix}
		x\\ y\\ z
	\end{pmatrix}\oplus\begin{pmatrix}
		b_n\\f_n\\1
	\end{pmatrix}\\
	\nonumber	=&\begin{pmatrix}
		a_n & 0 & b_n\\
		c_n & d_n & f_n\\
		0 & 0 & 1
	\end{pmatrix} \begin{pmatrix}
		x\\ y\\ z
	\end{pmatrix},
\end{align}
where $ \begin{pmatrix}
	x\\ y\\ z
\end{pmatrix} $ represents the element $(x: y: z)$ in $\mathbb{RP}^2\setminus \mathbb{H}_{e_3}$. For non-zero $d_n$'s, $W_n$'s are non-singular transformations. Then $\big\{\mathbb{RP}^2\setminus \mathbb{H}_{e_3}; W_n:~ n=1,2,\ldots,N\big\}$
forms a  RPIFS. Note that $W_n$'s satisfy the join up conditions 
$W_n(x_0:y_0:z_0)=(x_{n-1}:0:z_{n-1})\oplus(0:y_{n-1}:z_{n-1})=(x_{n-1}:y_{n-1}:z_{n-1})$  and  $W_n(x_N:y_N:z_N)=(x_{n}:0:z_{n})\oplus(0:y_{n}:z_{n})=(x_{n}:y_{n}:z_{n})$. 
It can be seen that the projective transformation $W_n$ defined in (\ref{protfs}) maps the line segment $L$ (in $\mathbb{RP}^2\setminus \mathbb{H}_{e_3}$) parallel to the line $x=0$ into the line segment $W_n(L)$ (in $\mathbb{RP}^2\setminus \mathbb{H}_{e_3}$) parallel to the line $x=0$ so that the  ratio of the length of $W_n(L)$ to the length of $L$ is $\vert d_n\vert$. The maps $W_n$'s may or may not be contractive with respect to the real projective metric $d_{\mathbb{P}}$. But if $W_n$'s are contractive, then  Figure~\ref{prftfns} illustrates that $W_n$ maps a projective rectangle to a projective rectangle. 
\begin{figure}[h!]
	\centering
	\includegraphics[width=7cm, height=4cm]{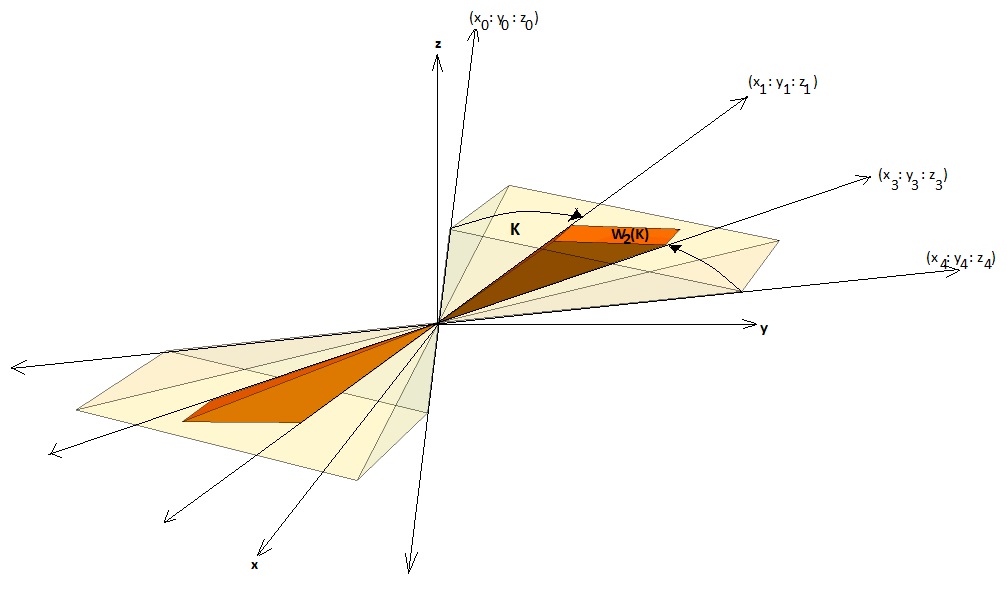}
	\caption{$W_2$ transforms the projective rectangle to a smaller projective rectangle.} \label{prftfns}
\end{figure}

Let $\theta$ be a positive real number. We define a new metric $d_{\theta}$ on $\mathbb{RP}^2\setminus \mathbb{H}_{e_3}$ as follows
\begin{equation*}
	d_{\theta}\big((x:y:z),(x':y':z')\big):=d_{\mathbb{P}}\big((x:0:z),(x':0:z')\big)+\theta d_{\mathbb{P}}\big((0:y:z),(0:y':z')\big).
\end{equation*}
\begin{lemma}
	The metric $d_{\theta}$ is equivalent to the metric $d_{\mathbb{P}}$.
\end{lemma}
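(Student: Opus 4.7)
The plan is to reduce the claim to an elementary two-variable inequality by introducing the natural \emph{component distances}
\[
A := d_{\mathbb{P}}\big((x:0:z),(x':0:z')\big) = \frac{|xz'-x'z|}{|zz'|}, \qquad
B := d_{\mathbb{P}}\big((0:y:z),(0:y':z')\big) = \frac{|yz'-y'z|}{|zz'|},
\]
for $(x:y:z),(x':y':z')\in \mathbb{RP}^2\setminus\mathbb{H}_{e_3}$. With this notation, directly from the definitions one gets
\[
d_{\mathbb{P}}\big((x:y:z),(x':y':z')\big) = \sqrt{A^2+B^2},
\qquad
d_{\theta}\big((x:y:z),(x':y':z')\big) = A + \theta B.
\]
Thus the equivalence of the two metrics is the purely real-analytic statement that $A+\theta B$ and $\sqrt{A^2+B^2}$ are comparable uniformly in $(A,B)\in[0,\infty)^2$.

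For the upper bound I would use Cauchy--Schwarz on the vectors $(1,\theta)$ and $(A,B)$, obtaining
\[
A+\theta B \le \sqrt{1+\theta^2}\,\sqrt{A^2+B^2}.
\]
For the lower bound I would combine the elementary estimate $\sqrt{A^2+B^2}\le A+B$ (valid for $A,B\ge 0$) with $A+\theta B\ge \min(1,\theta)(A+B)$ to get
\[
A+\theta B \;\ge\; \min(1,\theta)\,(A+B) \;\ge\; \min(1,\theta)\,\sqrt{A^2+B^2}.
\]
Putting these together yields $\min(1,\theta)\,d_{\mathbb{P}}\le d_{\theta}\le \sqrt{1+\theta^2}\,d_{\mathbb{P}}$, which is exactly the equivalence of metrics.

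There is essentially no hard step: the only thing to be careful about is writing down $d_{\mathbb{P}}((x:0:z),(x':0:z'))$ correctly from the definition of $\Vert\cdot\Vert_{\mathbb{P}}$, so that one actually recognizes $d_\theta$ as $A+\theta B$ and $d_{\mathbb{P}}$ as $\sqrt{A^2+B^2}$. (Strictly speaking, one should also check en passant that $d_\theta$ is a metric; this is immediate since it is a sum of two pseudometrics whose common kernel is the diagonal, because $A=B=0$ forces $xz'=x'z$ and $yz'=y'z$, hence $(x:y:z)=(x':y':z')$.) Once that reduction is in place, the proof is a two-line application of Cauchy--Schwarz and the norm comparison $\|\cdot\|_2\le\|\cdot\|_1$ on $\mathbb{R}^2$.
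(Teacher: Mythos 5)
Your proposal is correct and follows essentially the same route as the paper: both reduce the claim to comparing $A+\theta B$ with $\sqrt{A^2+B^2}$, where $A$ and $B$ are the component distances. The only (cosmetic) differences are that you obtain the sharper upper constant $\sqrt{1+\theta^2}$ via Cauchy--Schwarz where the paper uses the termwise bound $|a|\le\sqrt{a^2+b^2}$ to get $1+\theta$, and you unify the paper's two cases $\theta\ge 1$ and $\theta<1$ into the single factor $\min(1,\theta)$.
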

\begin{proof}
	For $a>0,b>0$, $\vert a\vert\leq \sqrt{a^2+b^2}$. So,
	\begin{align*}
		d_{\mathbb{P}}\big((x:0:z),(x':0:z')\big)+&\theta d_{\mathbb{P}}\big((0:y:z),(0:y':z')\big)\\
		=&\frac{\vert xz'-x'z\vert}{\vert zz'\vert}+\theta\frac{\vert yz'-y'z\vert}{\vert zz'\vert}\\
		\leq &(1+\theta)\frac{\sqrt{(xz'-x'z)^2+(yz'-y'z)^2}}{\vert zz'\vert}\\
		=&(1+\theta)d_{\mathbb{P}}\big((x:y:z),(x':y':z')\big).
	\end{align*}
	Therefore,
	\begin{equation*}
		d_{\theta}\big((x:y:z),(x':y':z')\big)\leq (1+\theta)d_{\mathbb{P}}\big((x:y:z),(x':y':z')\big).
	\end{equation*}
	{\bf Case 1.} If $\theta\geq 1$, then $d_{\mathbb{P}}\big((x:0:z),(x':0:z')\big)+\theta d_{\mathbb{P}}\big((0:y:z),(0:y':z')\big)\geq \frac{\vert  xz'-x'z\vert }{\vert zz'\vert }+\frac{\vert yz'-y'z\vert }{\vert zz'\vert }$. If $a>0,b>0$, then $\vert a+b\vert \geq\sqrt{a^2+b^2}$. Therefore,
	\begin{align*}
		d_{\theta}\big((x:y:z),(x':y':z')\big)&\geq\frac{\sqrt{(xz'-x'z)^2+(yz'-y'z)^2}}{\vert zz'\vert }\\
		&=d_{\mathbb{P}}\big((x:y:z),(x':y':z')\big).
	\end{align*}
	Hence $d_{\mathbb{P}}\big((x:y:z),(x':y':z')\big)\leq d_{\theta}\big((x:y:z),(x':y':z')\big)
	\leq (1+\theta)d_{\mathbb{P}}\big((x:y:z),(x':y':z')\big).$\\
	{\bf Case 2.} If $\theta<1$, then $\frac{1}{\theta}>1$, So, $\frac{1}{\theta}d_{\theta}\big((x:y:z),(x':y':z')\big)=\frac{1}{\theta}d_{\mathbb{P}}\big((x:):z),(x':0:z')\big)+d_{\mathbb{P}}\big((0:y:z),(0:y':z')\big).$ Then by similar arguments as in {\bf Case 1}, we have
	$\theta d_{\mathbb{P}}\big((x:y:z),(x':y':z')\big)\leq d_{\theta}\big((x:y:z),(x':y':z')\big)\leq (1+\theta)d_{\mathbb{P}}\big((x:y:z),(x':y':z')\big).$
	Therefore, the metric $d_{\theta}$ is equivalent to the metric $d_{\mathbb{P}}$.
\end{proof}

\begin{theorem}
	If $0<\theta\leq\frac{\min\{1-2\vert c_n\vert:~n=1,2,\ldots,N\}}{\max\{2\vert a_n\vert:~n=1,2,\ldots,N\}}$, $a=\max\{\vert a_n\vert+\theta \vert c_n\vert:~n=1,2,\ldots,N\}$,  $d=\max\{\vert d_n\vert:~n=1,2,\ldots,N\}<1$ and $c=\max\{a,d\}$, then the maps $W_n$'s are contractive with respect to the metric $d_{\theta}$ and the contraction factor $c$.
\end{theorem}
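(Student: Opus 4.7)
The plan is to exploit the additive decomposition $W_n(x:y:z) = L_n(x:0:z) \oplus F_n(x:y:z)$ already built into the definition of $W_n$, which aligns exactly with the fact that $d_\theta$ is a weighted sum of $d_\mathbb{P}$-distances on $\mathbb{H}_{10}$ and $\mathbb{H}_{01}$. Taking arbitrary points $p = (x:y:z)$ and $q = (x':y':z')$ in $\mathbb{RP}^2 \setminus \mathbb{H}_{e_3}$, I would expand $d_\theta(W_n(p), W_n(q))$ directly from the definitions of $\oplus$ and $d_\theta$ as
\[
d_\theta(W_n(p), W_n(q)) = d_\mathbb{P}\bigl(L_n(x:0:z), L_n(x':0:z')\bigr) + \theta\, d_\mathbb{P}\bigl(F_n(p), F_n(q)\bigr).
\]

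The first summand is handled instantly by (\ref{conolnms}) and equals $|a_n|\,D_x$, where I abbreviate $D_x := d_\mathbb{P}((x:0:z), (x':0:z'))$ and $D_y := d_\mathbb{P}((0:y:z), (0:y':z'))$. For the second summand I would either compute directly from (\ref{pms})---observing that the $f_n$-contributions cancel in the numerator, leaving $|c_n(xz' - x'z) + d_n(yz' - y'z)|$, which splits by the triangle inequality in $\mathbb{R}$---or equivalently apply the triangle inequality of $d_\mathbb{P}$ with an intermediate point such as $(x':0:z') \oplus (0:y:z)$ and invoke (\ref{conoffmswrfc}) and (\ref{conffnswrscs}) to obtain $d_\mathbb{P}(F_n(p), F_n(q)) \leq |c_n| D_x + |d_n| D_y$. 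Combining the two bounds yields
\[
d_\theta(W_n(p), W_n(q)) \leq (|a_n| + \theta |c_n|) D_x + \theta |d_n| D_y \leq c\,(D_x + \theta D_y) = c\, d_\theta(p, q),
\]
where the second inequality uses the definitions of $a$, $d$ and $c = \max\{a, d\}$, which were chosen precisely so that the coefficients of $D_x$ and of $\theta D_y$ are both dominated by the corresponding pieces of $d_\theta(p, q)$.

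The only place the specific upper bound on $\theta$ enters, and the main obstacle, is the verification that $c < 1$ so that each $W_n$ is a genuine contraction rather than merely $1$-Lipschitz. Since $d < 1$ is a hypothesis, the task reduces to showing $a < 1$, that is, $|a_n| + \theta|c_n| < 1$ for every $n$. This uses the hypothesis $0 < \theta \leq \frac{\min\{1 - 2|c_n|\}}{\max\{2|a_n|\}}$ together with the already-observed fact $|a_n| < 1$ from the construction of $L_n$; carrying out this short but slightly delicate arithmetic is the technical heart of the argument, while the rest of the proof is a routine reassembly of the estimates the previous section has already prepared.
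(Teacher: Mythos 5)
Your main estimate is exactly the paper's: both arguments split $d_\theta(W_n(p),W_n(q))$ into its $\mathbb{H}_{10}$- and $\mathbb{H}_{01}$-components, bound the first by $\vert a_n\vert D_x$ via (\ref{conolnms}), bound $d_\mathbb{P}(F_n(p),F_n(q))$ by $\vert c_n\vert D_x+\vert d_n\vert D_y$ (the paper does this with the triangle inequality through the intermediate point $(x:0:z)\oplus(0:y':z')$ and the Lipschitz estimates (\ref{conoffmswrfc}) and (\ref{conffnswrscs}); your direct computation in which the $f_n$-terms cancel is an equally valid shortcut), and then assemble $(\vert a_n\vert+\theta\vert c_n\vert)D_x+\theta\vert d_n\vert D_y\le c\,d_\theta(p,q)$. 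Up to this point your argument is correct and identical in substance to the paper's.

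The gap is precisely the step you set aside as ``short but slightly delicate arithmetic'': as stated it cannot be carried out. The hypothesis $\theta\le\frac{\min_n(1-2\vert c_n\vert)}{\max_n(2\vert a_n\vert)}$ yields, for each $n$, $2\theta\vert a_n\vert\le 1-2\vert c_n\vert$, i.e.\ $\theta\vert a_n\vert+\vert c_n\vert\le\tfrac12$ --- the combination with $a_n$ and $c_n$ in the \emph{wrong} positions relative to what the contraction estimate needs. It does not imply $\vert a_n\vert+\theta\vert c_n\vert<1$: take $\vert a_n\vert=0.99$, $\vert c_n\vert=\tfrac14$ and $\theta=\tfrac14\le\frac{1-1/2}{1.98}$; then $\vert a_n\vert+\theta\vert c_n\vert=0.99+0.0625>1$, so $a>1$ and no contraction factor $c<1$ is obtained. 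Knowing only $\vert a_n\vert<1$ does not rescue this, since $a_n$ (determined by the $x$-data) and $c_n$ (determined by the $y$-data and $d_n$) are independent. For what it is worth, the paper's own proof commits the same non sequitur, asserting that the hypothesis gives $\vert a_n\vert+\theta\vert c_n\vert\le\tfrac12$; the hypothesis that actually delivers that bound is $\theta\le\frac{\min_n(1-2\vert a_n\vert)}{\max_n(2\vert c_n\vert)}$ (which additionally forces $\vert a_n\vert<\tfrac12$), or, more simply, any $0<\theta<\min_n\frac{1-\vert a_n\vert}{\vert c_n\vert}$, which exists because $\vert a_n\vert<1$. So the one step you declined to execute is exactly the step that fails under the stated hypothesis, and deferring it leaves the proof incomplete.
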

\begin{proof}
	Since $W_n(x:y:z)=(a_nx+b_nz:c_nx+d_ny+f_nz:z)$, for $(x:y:z),(x':y':z')\in \mathbb{RP}^2\setminus \mathbb{H}_{e_3}$,
	\begin{align*}
		d_{\theta}&\big(W_n(x:y:z),W_n(x':y':z')\big)\\
		=d_{\theta}&\big((a_nx+b_nz:c_nx+d_ny+f_nz:z),(a_nx'+b_nz':c_nx'+d_ny'+f_nz':z')\big)\\
		=d_{\mathbb{P}}&\big((a_nx+b_nz:0:z),(a_nx'+b_nz':0:z')\big)\\
		&+\theta d_{\mathbb{P}}\big((0:c_nx+d_ny+f_nz:z),(0:c_nx'+d_ny'+f_nz':z')\big)\\
		=d_{\mathbb{P}}&\big(L_n(x:0:z),L_n(x':0:z')\big)+\theta d_{\mathbb{P}}\big(F_n(x:y:z),F_n(x':y':z')\big)\\
		\leq d_{\mathbb{P}}&\big(L_n(x:0:z),L_n(x':0:z')\big)+\theta d_{\mathbb{P}}\big(F_n(x:y:z),F_n(xz':y'z:zz')\big)\\
		&+\theta d_{\mathbb{P}}\big(F_n(xz':y'z:zz'),F_n(x':y':z')\big)\quad\mbox{(by triangular inequality)}.
	\end{align*}
	Using (\ref{conolnms}), (\ref{conoffmswrfc}) and (\ref{conffnswrscs}), we get
	\begin{align*}
		d_{\theta}&\big(W_n(x:y:z),W_n(x':y':z')\big)\\ 
		\leq &\vert a_n\vert d_{\mathbb{P}}\big((x:0:z),(x':0:z')\big)+\theta \vert d_n\vert d_{\mathbb{P}}\big((0:y:z),(0:y':z')\big)\\
		&+\theta \vert c_n\vert d_{\mathbb{P}}\big((x:0:z),(x':0:z')\big)\\
		=&\big(\vert a_n\vert+\theta\vert c_n\vert\big)d_{\mathbb{P}}\big((x:0:z),(x':0:z')\big)+\theta \vert d_n\vert d_{\mathbb{P}}\big((0:y:z),(0:y':z')\big)\\
		\leq& a~d_{\mathbb{P}}\big((x:0:z),(x':0:z')\big)+\theta d~d_{\mathbb{P}}\big((0:y:z),(0:y':z')\big)\\
		\leq& c~\bigg(d_{\mathbb{P}}\big((x:0:z),(x':0:z')\big)+\theta d_{\mathbb{P}}\big((0:y:z),(0:y':z')\big)\bigg)\\
		=&c~d_{\theta}\big((x:y:z),(x':y':z')\big).
	\end{align*}
	Since $\theta\leq \frac{1-2\vert c_n\vert }{2\vert a_n\vert }$, this implies $\big(\vert a_n\vert +\theta\vert c_n\vert \big)\leq\frac{1}{2}<1$ for $n=1,2,\ldots,N$. Therefore, $a<1$. Also, if we consider $\vert d_n\vert <1$, then $d<1$. This shows that $c<1$. Hence $W_n$'s are contraction maps. 
\end{proof}
Since the space $\big(\mathbb{RP}^2\setminus \mathbb{H}_{e_3},d_{\mathbb{P}}\big)$ is complete. So, the RPIFS $\big\{\mathbb{RP}^2\setminus \mathbb{H}_{e_3}; W_n:~ n=1,2,\ldots,N\big\}$ has an unique attractor, say $G$.  Now, we show that $G$ is the graph of a continuous function from $\mathbb{P}_{\mathbb{I}\times\{0\}}$ to $\mathbb{H}_{01}$. An illustration is provided in Figure~\ref{insidevrfif}, for $N=3$.
\begin{figure}[h!]
	\centering
	\includegraphics[width=5cm, height=5cm]{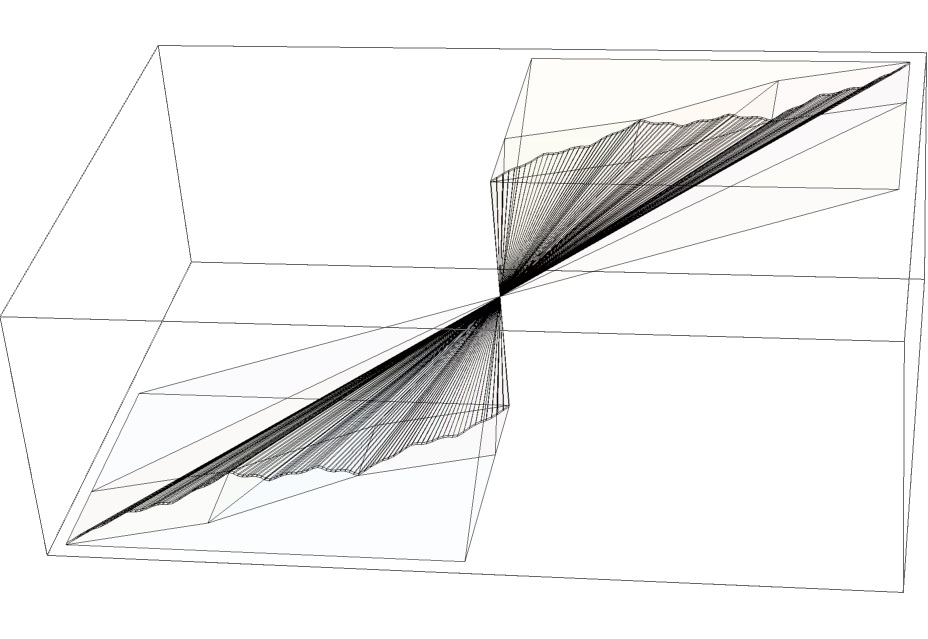}
	\caption{Inside view of construction of the graph of a RPFIF.} \label{insidevrfif}
\end{figure}

Over the projective interval $\mathbb{P}_{\mathbb{I}\times\{0\}}$, we consider the space of continuous functions $$\mathscr{C}[\mathbb{P}_{\mathbb{I}\times\{0\}}]:=\left\{f:~f:\mathbb{P}_{\mathbb{I}\times\{0\}}\to\mathbb{H}_{01}~\mbox{is continuous}\right\}.$$ 
Then from Lemma \ref{comnessc01}, the space $\mathscr{C}[\mathbb{P}_{\mathbb{I}\times\{0\}}]$ is complete with respect to $\Vert \cdot \Vert_{\mathbb{P}\infty}$. Let $$\mathscr{F}:=\left\{f\in\mathscr{C}[\mathbb{P}_{\mathbb{I}\times\{0\}}]:~f(x_0:0:z_0)=(0:y_0:z_0),~f(x_N:0:z_N)=(0:y_N:z_N) \right\}.$$ Then $\mathscr{F}$ is a closed subset of $\big(\mathscr{C}[\mathbb{P}_{\mathbb{I}\times\{0\}}],\Vert \cdot \Vert_{\mathbb{P}\infty}\big)$. So, $\mathscr{F}$ is complete. Finally, we define a  {\bf real projective Read-Bajraktarevic}-operator (RPRB)  $\mathcal{T}:\mathscr{F}\to \mathscr{F}$,  as follows.
\begin{equation}
	(\mathcal{T}f)(x:0:z):=F_n\big(L_n^{-1}(x:0:z)\oplus f\circ L_n^{-1}(x:0:z)\big)
\end{equation}
whenever $(x:0:z)\in \mathbb{P}_{\mathbb{I}_n\times\{0\}}$ for $n=1,2,\ldots, N$. 
\begin{theorem}
	The RPRB-operator $\mathcal{T}$ is well defined on $\mathscr{F}$.
\end{theorem}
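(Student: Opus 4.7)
The plan is to verify three things for any $f\in\mathscr{F}$: (i) the formula for $\mathcal{T}f$ gives a single unambiguous value at the interior node points $(x_n{:}0{:}z_n)$, $n=1,\dots,N-1$, which lie in both $\mathbb{P}_{\mathbb{I}_n\times\{0\}}$ and $\mathbb{P}_{\mathbb{I}_{n+1}\times\{0\}}$; (ii) $\mathcal{T}f$ is continuous on $\mathbb{P}_{\mathbb{I}\times\{0\}}$; and (iii) $\mathcal{T}f$ satisfies the two endpoint conditions built into the definition of $\mathscr{F}$.

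For (i), fix $n\in\{1,\dots,N-1\}$. Computed via $\mathbb{P}_{\mathbb{I}_n\times\{0\}}$, the join-up condition (\ref{joufl_n}) gives $L_n^{-1}(x_n{:}0{:}z_n)=(x_N{:}0{:}z_N)$, so applying $f\in\mathscr{F}$ and then $F_n$ yields $F_n\bigl((x_N{:}0{:}z_N)\oplus (0{:}y_N{:}z_N)\bigr)=F_n(x_N{:}y_N{:}z_N)$, which by (\ref{joucofn}) equals $(0{:}y_n{:}z_n)$. Computed via $\mathbb{P}_{\mathbb{I}_{n+1}\times\{0\}}$, the other join-up gives $L_{n+1}^{-1}(x_n{:}0{:}z_n)=(x_0{:}0{:}z_0)$, so the formula returns $F_{n+1}(x_0{:}y_0{:}z_0)=(0{:}y_n{:}z_n)$ by (\ref{joucofn}) again. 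Both branches agree, so $\mathcal{T}f$ is single-valued at every interior node.

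For (ii), on the interior of each $\mathbb{P}_{\mathbb{I}_n\times\{0\}}$ continuity is immediate: $L_n^{-1}$ is continuous (it is an affine, invertible projective map), $f$ is continuous by hypothesis, $\oplus$ is continuous with respect to $d_{\mathbb{P}}$ (it is a polynomial operation in homogeneous coordinates), and $F_n$ is Lipschitz by (\ref{conoffmswrfc})–(\ref{conffnswrscs}); hence the composition is continuous. Since the pieces match at the shared endpoints by step (i), a standard pasting argument yields continuity of $\mathcal{T}f$ on the whole of $\mathbb{P}_{\mathbb{I}\times\{0\}}$. For (iii), applying the formula at $(x_0{:}0{:}z_0)\in\mathbb{P}_{\mathbb{I}_1\times\{0\}}$ gives $F_1\bigl((x_0{:}0{:}z_0)\oplus(0{:}y_0{:}z_0)\bigr)=F_1(x_0{:}y_0{:}z_0)=(0{:}y_0{:}z_0)$ by (\ref{joucofn}); similarly at $(x_N{:}0{:}z_N)\in\mathbb{P}_{\mathbb{I}_N\times\{0\}}$ we obtain $(0{:}y_N{:}z_N)$.

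The only nontrivial step is (i): the whole construction is engineered precisely so that the two definitions at each interior node collapse to the common data value, and this is where the join-up conditions (\ref{joufl_n}) and (\ref{joucofn}) do all the work. The continuity and endpoint checks are then bookkeeping.
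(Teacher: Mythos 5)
Your proposal is correct and follows essentially the same route as the paper: the same join-up computations via (\ref{joufl_n}) and (\ref{joucofn}) showing that the two formulas agree at each interior node $(x_n{:}0{:}z_n)$ and that the endpoint conditions defining $\mathscr{F}$ hold. Your explicit continuity/pasting step (ii) is a point the paper leaves implicit, but it does not change the argument.
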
 
\begin{proof}
	For $(x_0:0:z_0)\in \mathbb{P}_{\mathbb{I}\times\{0\}}$,
	\begin{align*}
		(\mathcal{T}f)(x_0:0:z_0)&=F_1\big(L_1^{-1}(x_0:0:z_0)\oplus f\circ L_1^{-1}(x_0:0:z_0)\big)\\
		&=F_1\big((x_0:0:z_0)\oplus f(x_0:0:z_0)\big)\\
		&=F_1\big((x_0:0:z_0)\oplus(0:y_0:z_0)\big)\\
		&=F_1\big((x_0:y_0:z_0)\big)=(0:y_0:z_0).
	\end{align*}
	Similarly, $(\mathcal{T}f)(x_N:0:z_N)=(0:y_N:z_N).$\\
	Also, whenever $(x_n:0:z_n)\in \mathbb{P}_{\mathbb{I}_n\times\{0\}}$, then
	\begin{align*}
		(\mathcal{T}f)(x_n:0:z_n)&=F_n\big(L_n^{-1}(x_n:0:z_n)\oplus f\circ L_n^{-1}(x_n:0:z_n)\big)\\
		&=F_n\big((x_N:0:z_N)\oplus f(x_N:0:z_N)\big)\\
		&=F_n\big((x_N:0:z_N)\oplus(0:y_N:z_N)\big)\\
		&=F_n\big((x_N:y_N:z_N)\big)=(0:y_n:z_n)
	\end{align*}
	and whenever $(x_n:0:z_n)\in \mathbb{P}_{\mathbb{I}_{n+1}\times\{0\}}$, then
	\begin{align*}
		(\mathcal{T}f)(x_n:0:z_n)&=F_{n+1}\big(L_{n+1}^{-1}(x_n:0:z_n)\oplus f\circ L_{n+1}^{-1}(x_n:0:z_n)\big)\\
		&=F_{n+1}\big((x_0:0:z_0)\oplus f(x_0:0:z_0)\big)\\
		&=F_{n+1}\big((x_0:0:z_0)\oplus(0:y_0:z_0)\big)\\
		&=F_{n+1}\big((x_0:y_0:z_0)\big)=(0:y_n:z_n).
	\end{align*}
	This shows that $\mathcal{T}f$ is well defined and $\mathcal{T}f\in \mathscr{F}$.
\end{proof}
\begin{theorem}
	The RPRB-operator $\mathcal{T}$ is contractive on $\big(\mathscr{F},\Vert \cdot \Vert_{\mathbb{P}\infty}\big)$.
\end{theorem}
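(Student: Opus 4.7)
The plan is to estimate $d_{\mathbb{P}}\bigl((\mathcal{T}f)(x:0:z),(\mathcal{T}g)(x:0:z)\bigr)$ pointwise on each projective subinterval $\mathbb{P}_{\mathbb{I}_n\times\{0\}}$, and to exploit the fact that the two arguments fed into $F_n$ differ only in their $\mathbb{H}_{01}$-coordinate, so the Lipschitz estimate (\ref{conffnswrscs}) in the second argument of $F_n$ applies cleanly with constant $|d_n|$.

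Fix $f,g\in\mathscr{F}$ and $(x:0:z)\in\mathbb{P}_{\mathbb{I}_n\times\{0\}}$ for some $n\in\{1,\ldots,N\}$. Set $(u:0:w):=L_n^{-1}(x:0:z)\in\mathbb{P}_{\mathbb{I}\times\{0\}}$. By the definition of $\mathcal{T}$,
\begin{align*}
(\mathcal{T}f)(x:0:z)&=F_n\bigl((u:0:w)\oplus f(u:0:w)\bigr),\\
(\mathcal{T}g)(x:0:z)&=F_n\bigl((u:0:w)\oplus g(u:0:w)\bigr).
\end{align*}
The two arguments of $F_n$ share the same $\mathbb{H}_{10}$-component $(u:0:w)$ and differ only in their $\mathbb{H}_{01}$-components $f(u:0:w),g(u:0:w)\in\mathbb{H}_{01}$, so (\ref{conffnswrscs}) yields
\begin{equation*}
d_{\mathbb{P}}\bigl((\mathcal{T}f)(x:0:z),(\mathcal{T}g)(x:0:z)\bigr)=|d_n|\, d_{\mathbb{P}}\bigl(f(u:0:w),g(u:0:w)\bigr).
\end{equation*}

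Next, since $L_n^{-1}$ sends $\mathbb{P}_{\mathbb{I}_n\times\{0\}}$ onto $\mathbb{P}_{\mathbb{I}\times\{0\}}$, we have $d_{\mathbb{P}}(f(u:0:w),g(u:0:w))\le\|f\ominus g\|_{\mathbb{P}\infty}$. Taking the supremum over all $(x:0:z)\in\mathbb{P}_{\mathbb{I}\times\{0\}}$ (which amounts to taking the maximum over $n$ of the per-piece estimate), one obtains
\begin{equation*}
\|\mathcal{T}f\ominus\mathcal{T}g\|_{\mathbb{P}\infty}\le\Bigl(\max_{1\le n\le N}|d_n|\Bigr)\|f\ominus g\|_{\mathbb{P}\infty}=d\,\|f\ominus g\|_{\mathbb{P}\infty},
\end{equation*}
where $d=\max_n|d_n|<1$ by assumption (this is the same parameter appearing in the previous theorem). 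Hence $\mathcal{T}$ is a contraction on $(\mathscr{F},\|\cdot\|_{\mathbb{P}\infty})$ with contractivity factor $d$.

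There is no real obstacle here: the only thing one has to be careful about is making sure that when subtracting $(\mathcal{T}f)(x:0:z)$ from $(\mathcal{T}g)(x:0:z)$ the $\mathbb{H}_{10}$-parts cancel, which is exactly why (\ref{conffnswrscs}), and not (\ref{conoffmswrfc}), is the relevant Lipschitz estimate. Once this is noticed, the argument reduces to one line plus a supremum.
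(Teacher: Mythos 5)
Your proof is correct and follows essentially the same route as the paper's: both reduce the pointwise estimate to the fact that the two inputs to $F_n$ share the same $\mathbb{H}_{10}$-component, so only the second-argument Lipschitz constant $\vert d_n\vert$ survives, and then take the supremum to get the factor $d=\max_n\vert d_n\vert<1$. The only difference is presentational — the paper re-derives the $\vert d_n\vert$ cancellation by an explicit coordinate computation with $(x_1:0:z_1)$, $(0:y_2:z_2)$, $(0:y_3:z_3)$, whereas you simply invoke the already-established estimate (\ref{conffnswrscs}), which is legitimate since any element of $\mathbb{H}_{01}$ can be rescaled to share the third homogeneous coordinate of $(u:0:w)$.
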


\begin{proof}
	Let $f,g\in\mathscr{F}$. Then for $(x:0:z)\in \mathbb{P}_{\mathbb{I}_n\times\{0\}}$
	\begin{align}\label{tconsd}
		\Vert &(\mathcal{T}f)(x:0:z)\ominus (\mathcal{T}g)(x:0:z) \Vert_{\mathbb{P}}\\
		\nonumber=&\Vert F_n\big(L_n^{-1}(x:0:z)\oplus f\circ L_n^{-1}(x:0:z)\big)\ominus F_n\big(L_n^{-1}(x:0:z)\oplus g\circ L_n^{-1}(x:0:z)\big) \Vert_{\mathbb{P}}.
	\end{align}
	For simplicity,  $L_n^{-1}(x:0:z)=(x-b_nz:0:a_nz)=(x_1:0:z_1)$, (say). Then $f\circ L_n^{-1}(x:0:z)=f(x_1:0:z_1)=(0:y_2:z_2)$ and $g\circ L_n^{-1}(x:0:z)=g(x_1:0:z_1)=(0:y_3:z_3)$, (say).
	It follows that
	\begin{align}\label{expss}
		F_n\big(L_n^{-1}(x:0:z)\oplus f\circ L_n^{-1}(x:0:z)\big)=&F_n\big((x_1:0:z_1)\oplus (0:y_2:z_2)\big)\\
		\nonumber			=&F_n(x_1z_2:y_2z_1:z_1z_2)\\
		\nonumber			=&(0:c_nx_1z_2+d_ny_2z_1+f_nz_1z_2:z_1z_2)
	\end{align}
	and 
	\begin{align}\label{edplbbh}
		F_n\big(L_n^{-1}(x:0:z)\oplus g\circ L_n^{-1}(x:0:z)\big)=(0:c_nx_1z_3+d_ny_3z_1+f_nz_1z_3:z_1z_3).
	\end{align}
	Therefore, (\ref{tconsd}), (\ref{expss}) and (\ref{edplbbh}) together give
	\begin{align*}
		\Vert& (\mathcal{T}f)(x:0:z)\ominus (\mathcal{T}g)(x:0:z) \Vert_{\mathbb{P}}\\
		=&\lVert\big((0:c_nx_1z_2+d_ny_2z_1+f_nz_1z_2:z_1z_2)\ominus (0:c_nx_1z_3+d_ny_3z_1+f_nz_1z_3:z_1z_3)\big)\rVert_{\mathbb{P}}\\
		=&\lVert\big(0:(c_nx_1z_2+d_ny_2z_1+f_nz_1z_2)z_1z_3\\&-(c_nx_1z_3+d_ny_3z_1+f_nz_1z_3)z_1z_2:z^2_1z_2z_3)\big)\rVert_{\mathbb{P}}\\
		=&\lVert\big(0:d_n(y_2z_3-y_3z_2)z^2_1:z^2_1z_2z_3)\big)\rVert_{\mathbb{P}}\\
		=&\lVert d_n\odot\big(0:y_2z_3-y_3z_2:z_2z_3)\big)\rVert_{\mathbb{P}}\\
		=&\lvert d_n\rvert\lVert \big((0:y_2:z_2)\ominus (0:y_3:z_3)\big)\rVert_{\mathbb{P}}\\
		=&\lvert d_n\rvert\lVert \big(f\circ L_n^{-1}(x:0:z)\ominus g\circ L_n^{-1}(x:0:z)\big)\rVert_{\mathbb{P}}.
	\end{align*}
	Since $L_n^{-1}(x:0:z)\in \mathbb{P}_{\mathbb{I}\times\{0\}}$.
	Hence
	\begin{align*}
		\Vert (\mathcal{T}f)(x:0:z)\ominus (\mathcal{T}g)(x:0:z) \Vert_{\mathbb{P}}
		&\leq\lvert d_n\rvert\lVert f\ominus g\rVert_{\mathbb{P}\infty}\\
		&\leq d~\lVert f\ominus g\rVert_{\mathbb{P}\infty},
	\end{align*}
	where $d=\max\{\vert d_n\vert:~n=1,2,\ldots,N\}<1$. Taking supremum over all $(x:0:z)\in\mathbb{P}_{\mathbb{I}\times\{0\}}$, we get
	\begin{align*}
		\Vert\mathcal{T}f\ominus\mathcal{T}g \Vert_{\mathbb{P}\infty}\leq d~\Vert {f}\ominus{g}\Vert_{\mathbb{P}\infty}.
	\end{align*}
	Hence the  RPRB-operator $\mathcal{T}$ is contractive on $\mathscr{F}$.
\end{proof}
%%%%%%%%%%%%%%%%%%%%%%%%
Since $\big(\mathscr{F},\Vert \cdot \Vert_{\mathbb{P}\infty}\big)$ is complete, by Banach fixed point theorem $\mathcal{T}$ has an unique fixed point {\bf f} in $\mathscr{F}$. We call {\bf f} as the  {\bf real projective fractal interpolation function} (RPFIF) corresponding to the RPIFS  $\big\{\mathbb{RP}^2\setminus \mathbb{H}_{e_3}; W_n:~ n=1,2,\ldots,N\big\}$.  This proves the existence of a RPFIF {\bf f} in {\bf Theorem~\ref{maint11}}.
\begin{theorem}\label{mainthofif}
	The graph of the function {\bf f} is the attractor of the RPIFS $\big\{\mathbb{RP}^2\setminus \mathbb{H}_{e_3}; W_n:~ n=1,2,\ldots,N\big\}$. That is 
	$	G=graph({\bf f})$.
\end{theorem}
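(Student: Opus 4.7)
The plan is to show that the set
\[
G := \bigl\{(x:0:z) \oplus \mathbf{f}(x:0:z) : (x:0:z) \in \mathbb{P}_{I\times\{0\}}\bigr\}
\]
is invariant under the Hutchinson operator $W$ associated with $\{W_n\}_{n=1}^{N}$; once that is established, uniqueness of the attractor of the contractive RPIFS on the complete space $(\mathbb{RP}^2\setminus\mathbb{H}_{e_3},d_{\mathbb{P}})$ forces $G$ to coincide with the attractor.

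The first step is to verify that $G\in\mathscr{H}(\mathbb{RP}^2\setminus\mathbb{H}_{e_3})$. This follows because $\mathbf{f}\in\mathscr{F}\subset\mathscr{C}[\mathbb{P}_{I\times\{0\}}]$ is continuous, the map $(x:0:z)\mapsto (x:0:z)\oplus \mathbf{f}(x:0:z)$ is continuous in $d_{\mathbb{P}}$ (the operations $\oplus,\odot$ and the norm $\Vert\cdot\Vert_{\mathbb{P}}$ are continuous in the coordinates $x/z,y/z$), and $\mathbb{P}_{I\times\{0\}}$ is compact by the earlier lemma. Hence $G$ is the continuous image of a compact set, so compact.

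Next I compute $W_n(G)$ for a fixed $n$. A generic point of $G$ has the form $p=(x:0:z)\oplus \mathbf{f}(x:0:z)$ with $(x:0:z)\in\mathbb{P}_{I\times\{0\}}$. By the definition $W_n(x:y:z)=L_n(x:0:z)\oplus F_n(x:y:z)$ from equation (\ref{protfs}), and since $F_n$ depends on the full point,
\[
W_n(p) = L_n(x:0:z) \oplus F_n\bigl((x:0:z)\oplus \mathbf{f}(x:0:z)\bigr).
\]
Setting $(x':0:z'):=L_n(x:0:z)\in\mathbb{P}_{I_n\times\{0\}}$ and using $L_n^{-1}(x':0:z')=(x:0:z)$ together with the fixed-point relation $\mathcal{T}\mathbf{f}=\mathbf{f}$, the second summand becomes exactly $(\mathcal{T}\mathbf{f})(x':0:z')=\mathbf{f}(x':0:z')$. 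Therefore
\[
W_n(G) = \bigl\{(x':0:z')\oplus \mathbf{f}(x':0:z') : (x':0:z')\in\mathbb{P}_{I_n\times\{0\}}\bigr\}.
\]
Because $L_n$ is a homeomorphism of $\mathbb{P}_{I\times\{0\}}$ onto $\mathbb{P}_{I_n\times\{0\}}$ by the join-up condition (\ref{joufl_n}), this set equals the ``graph of $\mathbf{f}$ restricted to $\mathbb{P}_{I_n\times\{0\}}$''. Taking the union,
\[
W(G)=\bigcup_{n=1}^{N} W_n(G) = \bigl\{(x':0:z')\oplus \mathbf{f}(x':0:z') : (x':0:z')\in \textstyle\bigcup_{n=1}^{N}\mathbb{P}_{I_n\times\{0\}}\bigr\} = G,
\]
where the last equality uses the partition identity $\bigcup_{n=1}^{N}\mathbb{P}_{I_n\times\{0\}}=\mathbb{P}_{I\times\{0\}}$, which follows from the ordering $x_{n-1}z_n<x_nz_{n-1}$ of the projective interpolation nodes.

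Finally, because $(\mathbb{RP}^2\setminus\mathbb{H}_{e_3},d_{\theta})$ is complete (being equivalent to $d_{\mathbb{P}}$) and the $W_n$ are contractions with respect to $d_{\theta}$ under the standing parameter hypothesis, the Hutchinson operator $W$ is a contraction on $(\mathscr{H}(\mathbb{RP}^2\setminus\mathbb{H}_{e_3}),h_{d_{\theta}})$ with the unique fixed point being the attractor. Since $G$ is a compact fixed point of $W$, it must be that attractor. The main subtle point is the book-keeping step $F_n((x:0:z)\oplus\mathbf{f}(x:0:z)) = (\mathcal{T}\mathbf{f})(L_n(x:0:z))$, i.e.\ aligning the independent variable with the right subinterval under $L_n$; once this is unwound using $L_n^{-1}\circ L_n=\mathrm{id}$, the rest is essentially the classical Barnsley argument transplanted to the projective setting.
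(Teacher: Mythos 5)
Your proposal is correct and follows essentially the same route as the paper's proof: compute $W_n$ on a generic graph point, use the fixed-point relation $\mathcal{T}\mathbf{f}=\mathbf{f}$ to identify $F_n\bigl((x:0:z)\oplus\mathbf{f}(x:0:z)\bigr)$ with $\mathbf{f}\bigl(L_n(x:0:z)\bigr)$, conclude invariance under the Hutchinson operator, and invoke uniqueness of the attractor. Your explicit checks of compactness of the graph and of the partition identity $\bigcup_n\mathbb{P}_{I_n\times\{0\}}=\mathbb{P}_{I\times\{0\}}$ are welcome details the paper leaves implicit, but they do not change the argument.
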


\begin{proof}
	Note that from (\ref{decoofrp2}), the graph of any continuous function  $f: \mathbb{H}_{10}\rightarrow \mathbb{H}_{01}$  can be expressed as  $$graph(f)=\left\{(x:0:z)\oplus{ f}(x:0:z):~(x:0:z)\in \mathbb{H}_{10}\right\}.$$ 
	Now, let $\tilde{G}=graph({\bf f}):=\left\{(x:0:z)\oplus{\bf f}(x:0:z):~(x:0:z)\in \mathbb{P}_{\mathbb{I}\times\{0\}}\right\}.$ Then 
	\begin{equation}\label{ghjdyhd}
		\bigcup_{n=1}^NW_n(\tilde{G})=\bigcup_{n=1}^N\left\{W_n\big((x:0:z)\oplus{\bf f}(x:0:z)\big):~(x:0:z)\in \mathbb{P}_{\mathbb{I}\times\{0\}}\right\}.
	\end{equation}
	Since ${\bf f}:\mathbb{P}_{\mathbb{I}\times\{0\}}\rightarrow\mathbb{H}_{01}$, ${\bf f}(x:0:z)=(0:v:w)$, (say). Then, we get 
	\begin{align}\label{hngker}
		W_n\big((x:0:z)\oplus{\bf f}(x:0:z)\big)&=W_n\big((x:0:z)\oplus(0:v:w)\big)\\
		\nonumber	&=W_n(xw:vz:zw)\\
		\nonumber	&=L_n(xw:0:zw)\oplus F_n(xw:vz:zw)\\
		\nonumber   &=L_n(x:0:z)\oplus F_n(xw:vz:zw).
	\end{align}
	Since, {\bf f} is the fixed point of the RPRB-operator $\mathcal{T}$. Therefore,
	\begin{align}\label{ghfuo}
		{\bf f}\big(L_n(x:0:z)\big)&=(\mathcal{T}{\bf f})\big(L_n(x:0:z)\big)\\
		\nonumber	&=F_n\big((x:0:z)\oplus{\bf f}(x:0:z)\big)\\
		\nonumber	&=F_n\big((x:0:z)\oplus(0:v:w)\big)\\
		\nonumber   &= F_n(xw:vz:zw). 
	\end{align}
	From (\ref{hngker}) and (\ref{ghfuo}), we get
	\begin{equation}\label{lwesf}
		W_n\big((x:0:z)\oplus{\bf f}(x:0:z)\big)=L_n(x:0:z)\oplus {\bf f}\big(L_n(x:0:z)\big).
	\end{equation}	
	Therefore, using (\ref{ghjdyhd}) and (\ref{lwesf}), it follows that
	\begin{align*}
		\bigcup_{n=1}^NW_n(\tilde{G})&=\bigcup_{n=1}^N\left\{L_n(x:0:z)\oplus{\bf f}\big(L_n(x:0:z)\big):~(x:0:z)\in \mathbb{P}_{\mathbb{I}\times\{0\}}\right\}\\
		&=\left\{(x:0:z)\oplus{\bf f}(x:0:z):~(x:0:z)\in \mathbb{P}_{\mathbb{I}\times\{0\}}\right\}\\
		&=\tilde{G}.
	\end{align*}
	That is $\tilde{G}$, is also an attractor of the RPIFS $\big\{\mathbb{RP}^2\setminus \mathbb{H}_{e_3}; W_n:~ n=1,2,\ldots,N\big\}$. Hence by the uniqueness of attractor, $G=\tilde{G}$. 
\end{proof}
This completes the proof of {\bf Theorem~\ref{maint11}}.\\

A step by step constructions of a RPFIF on $\mathbb{RP}^2\setminus \mathbb{H}_{e_3}$ and its corresponding self-affine FIF at $z=z_0 \; (\neq 0)$ are illustrated in the following Figures~\ref{initialstep}, \ref{step1}, \ref{step2}, \ref{step3} and \ref{finalstep}.

\begin{figure}[ht!]
	\centering
	\includegraphics[width=4cm, height=4cm]{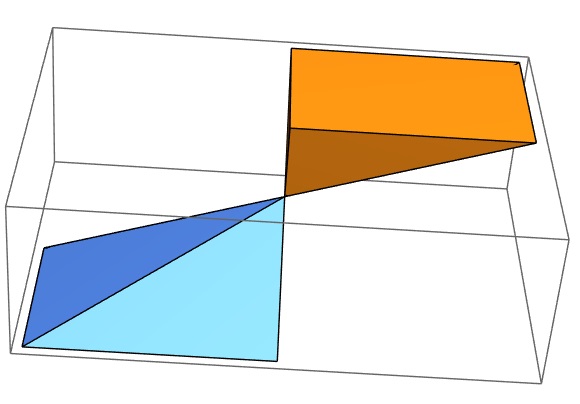}
	\includegraphics[width=3.5cm, height=3.5cm]{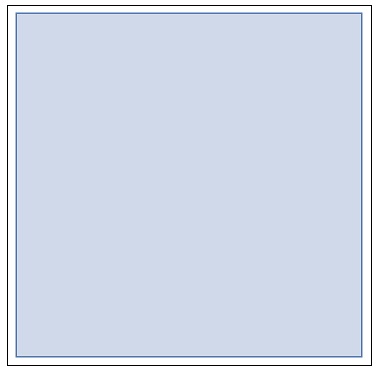}
	\caption{Initial projective rectangle in $\mathbb{RP}^2\setminus \mathbb{H}_{e_3}$ and the corresponding rectangle at $z=z_0 \; (\neq 0)$.} \label{initialstep}
\end{figure}
	\begin{figure}
		\centering
	\includegraphics[width=4cm, height=4cm]{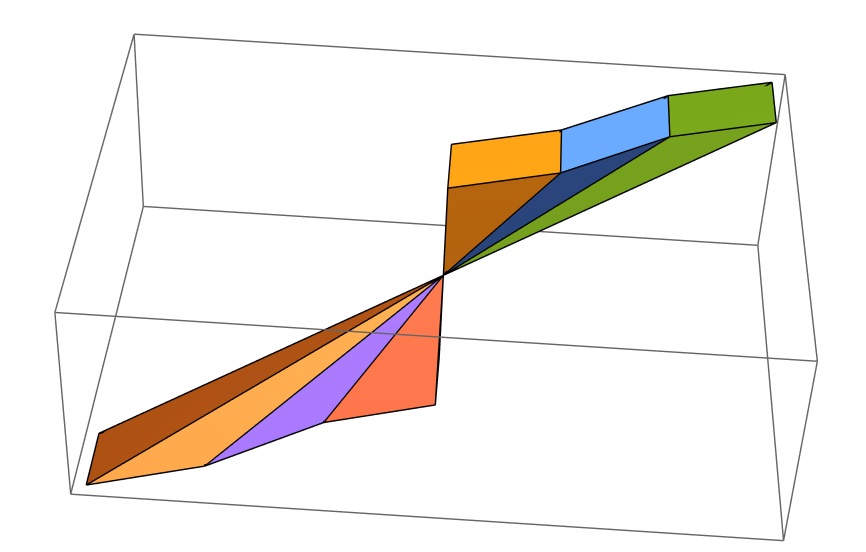}
	\includegraphics[width=3.5cm, height=3.5cm]{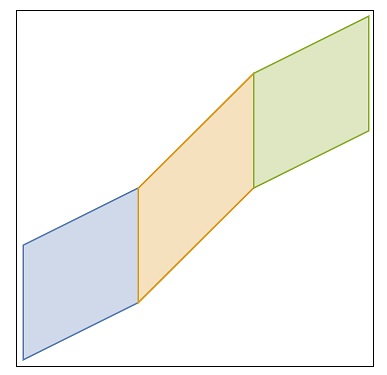}
	\caption{First step of the construction of the  RPFIF and the corresponding FIF.} \label{step1}
	\includegraphics[width=4cm, height=4cm]{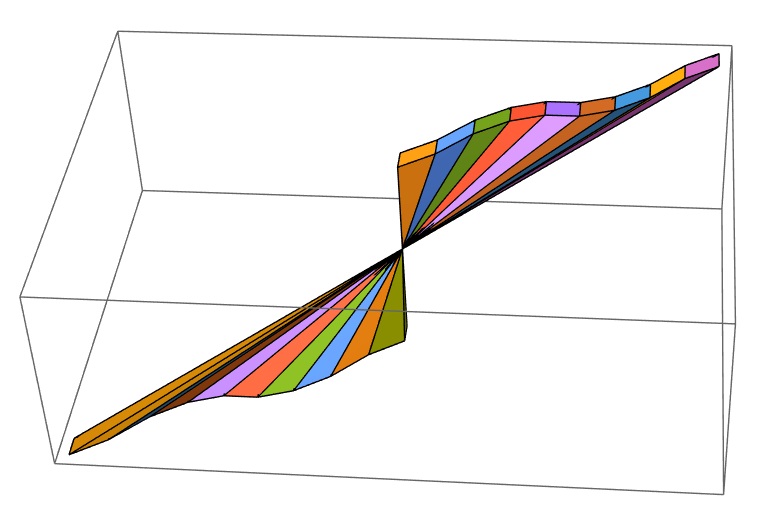}
	\includegraphics[width=3.5cm, height=3.5cm]{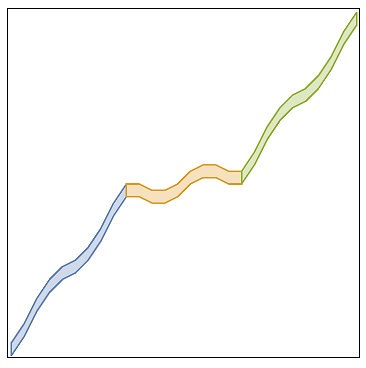}
	\caption{Second step of the construction of the RPFIF and the corresponding FIF.} \label{step2}
	\includegraphics[width=4cm, height=4cm]{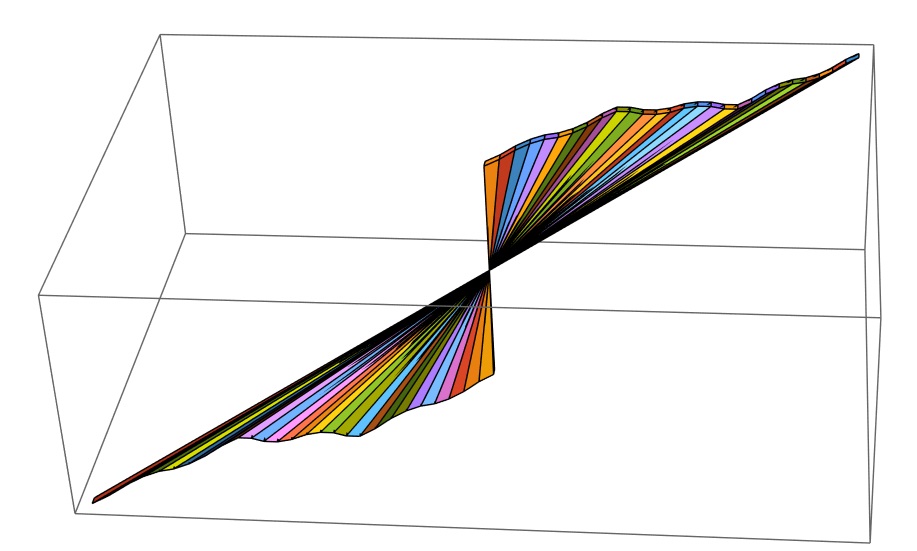}
	\includegraphics[width=3.5cm, height=3.5cm]{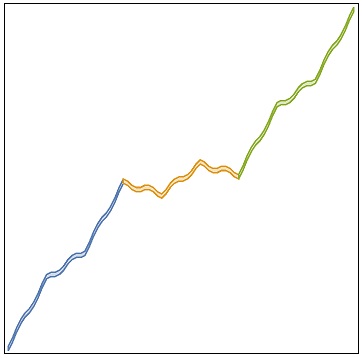}
	\caption{Third step of the construction of the RPFIF and the corresponding FIF.} \label{step3}
	\includegraphics[width=4.5cm, height=4.3cm]{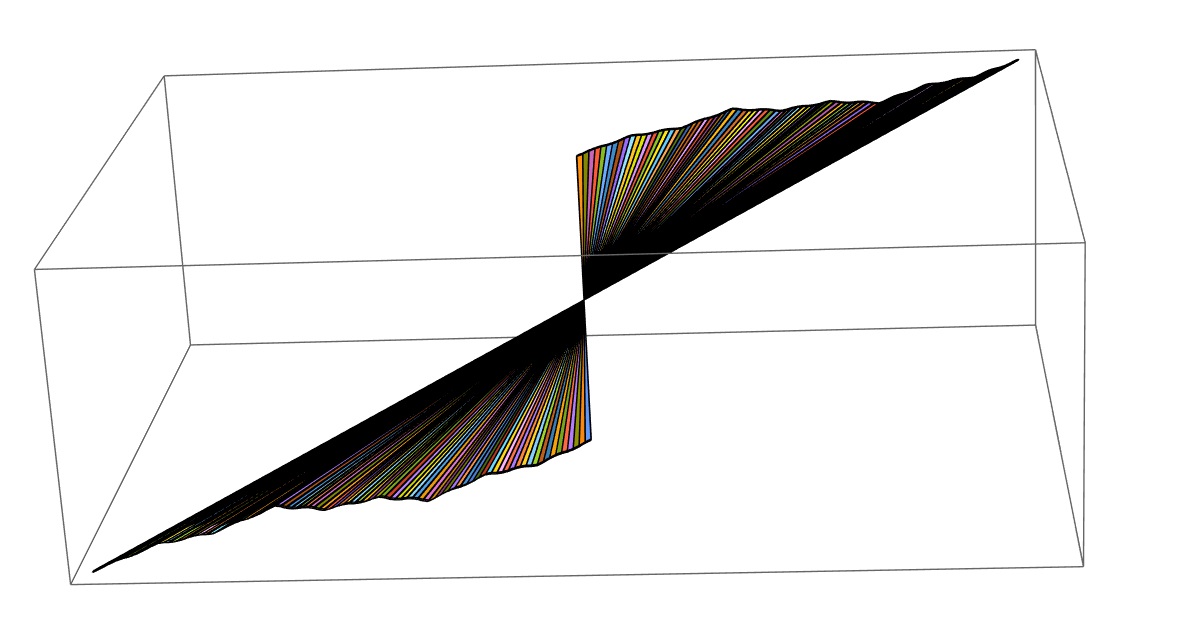}
	\includegraphics[width=4cm, height=4cm]{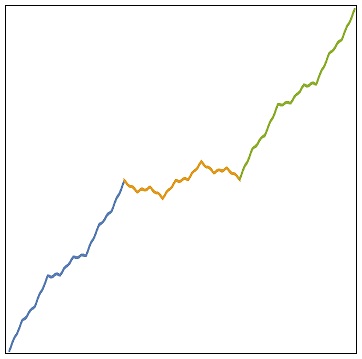}
	\caption{Graphs of the  RPFIF and it's corresponding self-affine FIF at level $z=z_0 \; (\neq 0)$.} \label{finalstep}
\end{figure}
\begin{example}\label{expoftgotrpfif}
	Consider the set of data points   $$\left\{(-2\lambda:\lambda:\lambda),~(-\lambda:-\lambda:\lambda),~(0:\lambda:\lambda),~(\lambda:-\lambda:\lambda),~(2\lambda:\lambda:\lambda)\right\},$$ where $\lambda\neq 0$, and the  scaling factors $d=0.1$, $d=0.3$ and $d=-0.3$ respectively. Then a family of RPFIF is illustrated in  Figure~\ref{DifftypeRPFIF3}, for different scaling factors.
\end{example}

\begin{figure}[!ht]
	\centering
	\includegraphics[width=3cm, height=4cm]{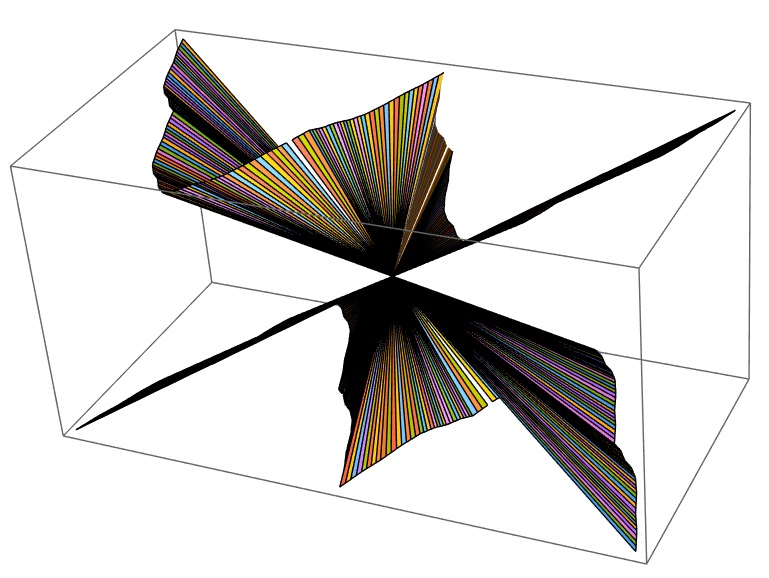}
	\includegraphics[width=3cm, height=4cm]{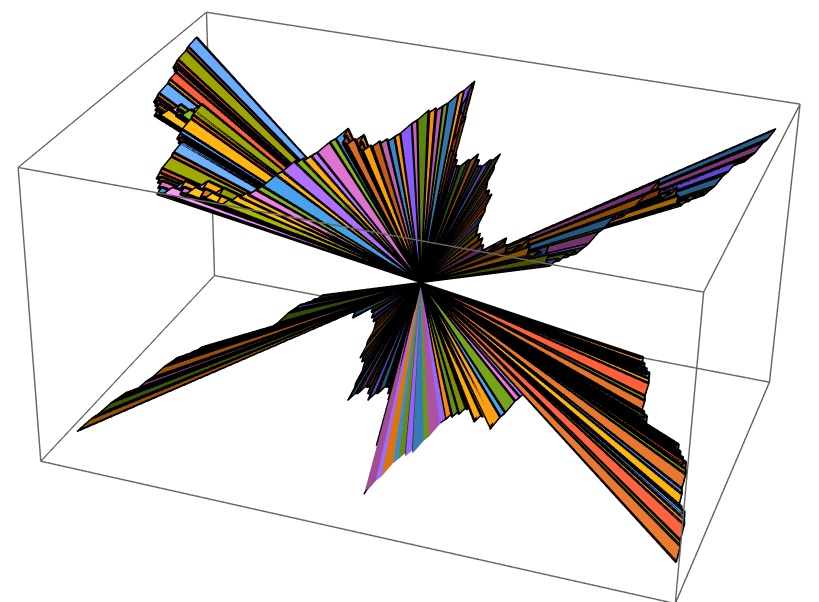}
	\includegraphics[width=3cm, height=4cm]{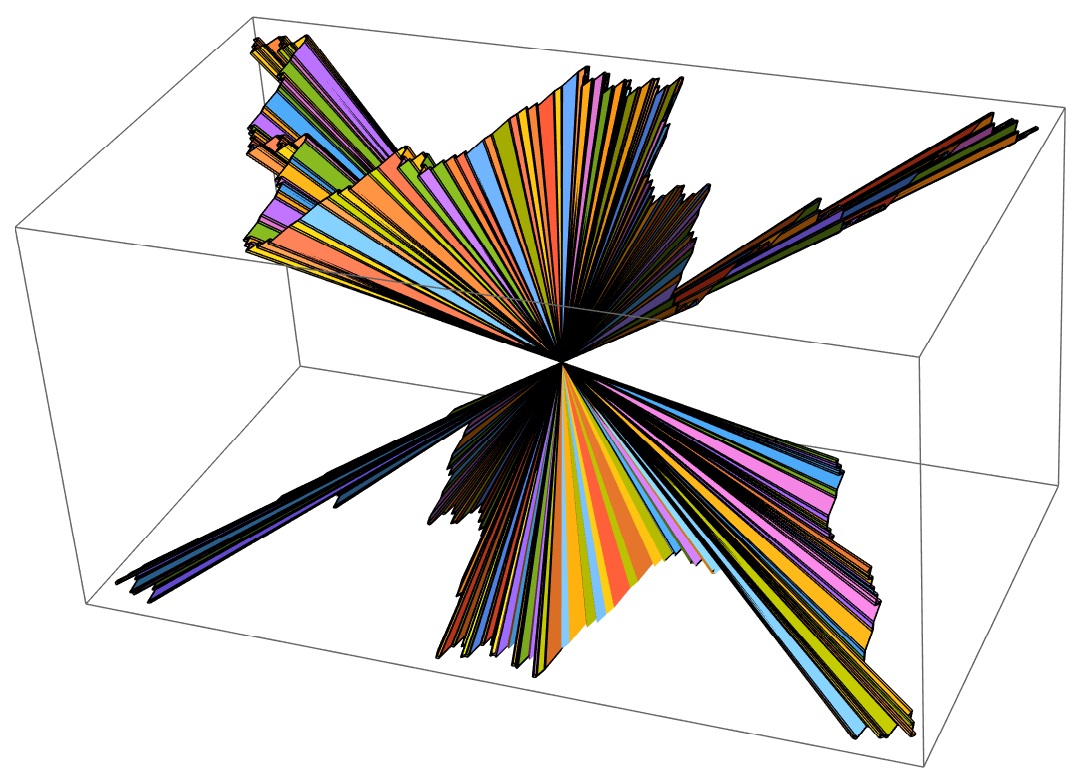}
	\caption{Graphs of the  RPFIFs with scaling factors $d=0.1$, $d=0.3$ and $d=-0.3$ respectively.}\label{DifftypeRPFIF3}
\end{figure}

	\section*{Concluding remarks}
	\begin{remark}
		(Further Extensions). In this article, we considered the space 	$\mathbb{RP}^2$ for notational simplicity only. One may consider projective space $\mathbb{RP}^n$ with more dimensions and deal with the bivariate case. To do that one needs to generalize the operations on the vector space first.\\
		In Section~4, instead of begining with scalar $d_n$, the idea can be extended to the model which considers $d_n$ depending on a variable(s). The prerequisite is to check if these mappings must satisfy some conditions for the fact of working on a projective space. In the ordinary real continuous case, only continuity is required, and there is no need of join-up conditions.\\
		
		Perspective view is the two dimensional replica of a three dimensional figure, where the apparent size of an object decreases as its distance from the viewer point increases. Lenses of camera and the human eye work in the same way, therefore perspective view looks most realistic \cite{Cgbydm}. One can look into the graph of a RPFIF in a different perspective view and estimate the fractal dimension of the corresponding curve which is made by intersection of the graph of the RPFIF with the object plane.
	\end{remark}
	\begin{remark}
		(Motivation for the construction of RPFIF)
		To deal with real world processes which may be irregular in forms, traditional classical interpolants may not provide good approximations. However, the fractal functions which have irregular structure with some degree of self-similarity represent as an alternative to the classical interpolants.
		The non-self-affine fractal analogues $f^{\alpha}$ of any continuous function $f$ form bases for  many standard functions spaces delineating a new field of research referred as fractal approximation theory \cite{ftsbyman}.
		
		However, the more complicated real world phenomena such as tornado, Boy's surfaces, radar, wormhole, etc. may not be well approximated using classical approximant or existing fractal approximant. The Figure~\ref{fig1} is an easy illustration that the projective fractal approximant would be a more suitable approximant rather the existing classical and fractal approximant. 
		\begin{figure}[h!]
			\centering
			\includegraphics[width=7cm, height=5cm]{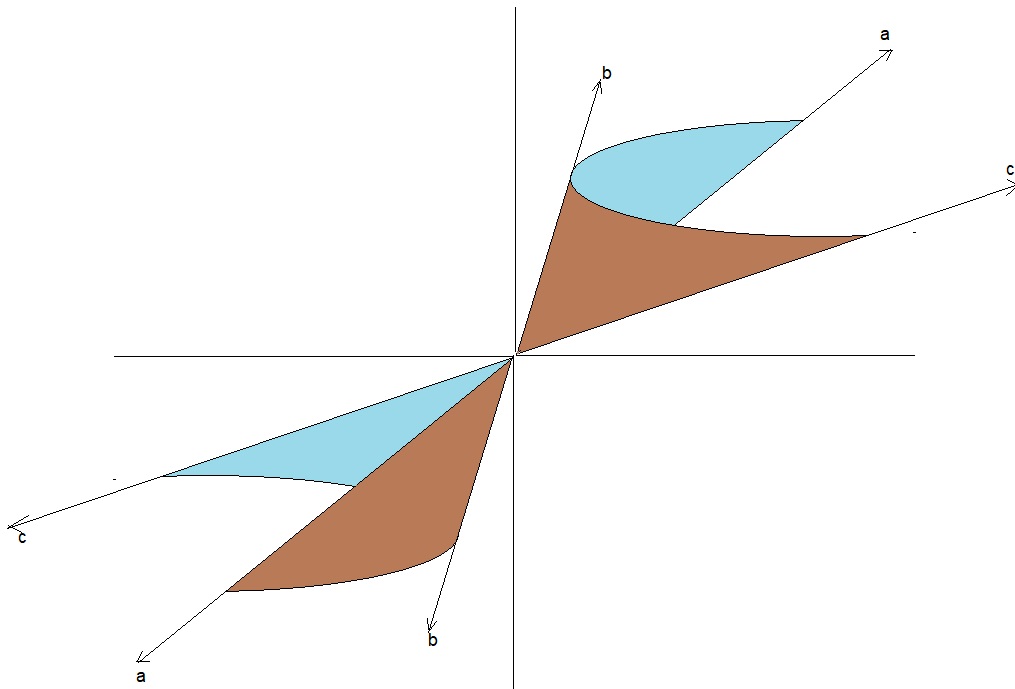}
			\caption{Conic section on the projective plane.} \label{fig1}
		\end{figure}	
	\end{remark}
	\begin{remark}
		(Applications)
		A very real use of projective geometry is given in computer vision. By taking a picture (a 2D perspective of a 3D world) exactly corresponds to a projective transform. On the other hand, fractal transformations generate an image on an attractor  from another image supported on an attractor with a similar IFS structure \cite{Barnsley2013}. So, one may define {\it projective fractal transformations} as application to image processing, pixel changing, camera modeling, etc. Also the projective tiling has many applications in mathematics applied to the real world. So, one may study about the fractal projective tiling on a projective space.

	\end{remark}	
	\begin{remark}
		(Advantages)
		\begin{enumerate}
			\item One of the main advantages working with projective space is that any object at any level can be viewed zooming to ``zero" as well as zooming out to ``infinity".
			\item Self-affine RPFIF displays similarity in projective subintervals.
			\item The level curves of the graph of a RPFIF at each contour are similar. That is the level curve at value $z=a$  is similar to the level curve at  $z=b$ up to contraction.
			\item If we consider the attractor as a subset of $\mathbb{R}^3$, then it is a never ending fractal.
		\end{enumerate}	
	\end{remark}	
	
	{\bf Acknowledgments:} The authors thank Akash Banerjee for many helpful discussions to get the figures.

	\bibliographystyle{siamplain}
	\bibliography{arxivpaperRPFIF}
\end{document}